\numberwithin{equation}{section}
\def\RR{{\mathbb R}}
\def\CC{{\mathbb C}}
\def\eps{\varepsilon}
\newcommand{\la}{\lambda}
\newtheorem{theorem}{Theorem}[section]
\newtheorem{proposition}[theorem]{Proposition}
\newtheorem{corollary}[theorem]{Corollary}
\theoremstyle{definition}
\newtheorem{definition}{Definition}[section]
\newtheorem{remark}{Remark}[section]
\def\now{ \ifnum\hours>11 \ifnum\hours>12 \advance\hours by
-12 \fi \number\hours:\ifnum\mins<10 0\fi \number\mins\ pm,\ \else
\ifnum\hours=0 \hours=12 \fi \number\hours:\ifnum\mins<10 0\fi
\number\mins\ am,\ \fi}
\newcommand{\V}{\mathbb{V}}
\newcommand{\R}{\mathcal{R}}
\newcommand{\ToDoList}[1]{\shadowbox{\begin{minipage}{3.5in} #1 \end{minipage}}}
\newcommand{\Comment}[1]{\marginpar{\footnotesize #1}}
\renewcommand{\ToDoList}[1]{}
\renewcommand{\Comment}[1]{}
\newcommand{\CSK}{Cauchy-Stieltjes Kernel (CSK)\ \renewcommand{\CSK}{CSK\ }}
\title{One-sided Cauchy-Stieltjes Kernel Families}
\author{
W{\l}odzimierz  Bryc}\thanks{\noindent Research partially supported
by NSF grant  DMS-0904720
 and by Taft Research Seminar 2008/09}
\date{ Created May 10, 2009. Revised: June 19, 2009 and April 27, 2010.\\
{\em File:} {\tt \jobname .tex} of  \today, \now
}
\address{
Department of Mathematical Sciences,\\
University of Cincinnati,\\
PO Box 210025,\\
Cincinnati, OH 45221--0025, USA}
\email{Wlodzimierz.Bryc@UC.edu}
\author{Abdelhamid Hassairi}
\address{Faculty of Sciences\\
Sfax University \\
PO Box 1171, 3000, Sfax, Tunisia }
\email{abdelhamid.hassairi@fss.rnu.tn}
\keywords{exponential families, Cauchy kernel, free stable law}
\subjclass[2000]{60E10; 46L54}
\begin{document}

\begin{abstract}
This paper continues the study of   a  kernel  family   which uses
the Cauchy-Stieltjes kernel $1/(1-\theta x)$ in place of the
celebrated exponential kernel $\exp({\theta x})$ of the exponential
families theory.  We extend the theory to cover generating measures
with support that is unbounded on one side. We illustrate the need
for such an extension by showing that cubic pseudo-variance
functions  correspond to free-infinitely divisible laws without the
first moment. We also
determine the domain of means, advancing  the understanding of
 Cauchy-Stieltjes kernel families
also for compactly supported generating measures.
\end{abstract}

\maketitle

%\section{One-sided Cauchy-Stieltjes Kernel Families}\label{Kernel Section}

\section{Introduction and definition}
According to Weso\l owski \cite{Wesolowski90}, the kernel family
generated by a kernel $k(x,\theta)$ with generating measure $\nu$ is the set of probability measures
$$\{k(x,\theta)/L(\theta)\nu(dx):\; \theta\in\Theta\},$$ where
$L(\theta)=\int k(x,\theta)\nu(dx)$ is the normalizing constant, and
$\nu$ is the generating measure.
The theory of exponential families is based on the kernel
$k(x,\theta)=e^{\theta x}$.  See, e.g., \cite{Jorg}, \cite{Let},  or
\cite[Section 2.3]{Diaconis-08}. Reference  \cite{Bryc-06-08}
initiated the study of the Cauchy-Stieltjes  kernel
\begin{equation*}%\label{CS-kernel}
k(x,\theta)=\frac{1}{1-  \theta x},
\end{equation*}
  for compactly supported generating measures $\nu$. In the
 neighborhood of $\theta=0$, such families can be parameterized by
 the mean and under this parametrization, the family (and measure
 $\nu$) is uniquely determined by the variance
function $V(m)$ and a real number $m_0$, which is the mean of $\nu$. The  \CSK
  family has some properties analogous to classical exponential
 families, with convolution of measures replaced in some results by
 the free additive convolution.

 For measures with unbounded support, one can still
 define the \CSK family
 if the support of the generating measure is bounded above or below.
 In such situation,
 the family is parameterized by a "one-sided" range of $\theta$ of a
 fixed sign. In this note we consider generating measures with support
 bounded from above
 and our \CSK families  are parameterized by $\theta > 0$.
 Part of our motivation is to
 admit  the free additive $1/2$-stable law \cite[page
 1054]{Bercovici:1999} as a generating measure;  for exponential
 families the celebrated inverse gaussian law is $1/2$-stable and
 corresponds to cubic variance function.

Throughout the paper,
  \begin{equation}\label{B(nu)}
  B=B(\nu)=\max\{0,\sup \mbox{supp}(\nu)\}
\end{equation}
denotes the (sometimes strict) non-negative upper bound for the support of $\nu$. Occasionally we will want to explain how the formulas change for the "dual case" of measures with support bounded from below.
Then we will use $A=A(\nu)=\min\{0,\inf \mbox{supp}(\nu)\}$. Of course, $\mbox{supp}(\nu)\subset [A,B]$ when both expressions are finite.
 \begin{definition}
  Suppose $\nu$ is a   non-degenerate (i.e. not a point mass)
 probability measure with support  bounded from above.
%%%in $(-\infty,b]$.
%  Let
%  \begin{equation}\label{B(nu)}
%  B(\nu)=\max\{0,\sup \mbox{supp}(\nu)\}
%\end{equation}
%and
For  $0\leq \theta B(\nu)<1$ let
\begin{equation*}  % \label{M(theta)}
M(\theta)=\int \frac{1}{1- \theta x} \nu(dx).
\end{equation*}
The (one-sided) Cauchy-Stieltjes kernel family generated by
$\nu$ is the family of probability measures
\begin{equation}\label{Kernel family}
\mathcal{K}_+(\nu)=\left\{P_\theta(dx)=\frac{1}{M(\theta)(1-\theta
x)}\nu(dx): 0< \theta<\theta_+\right\},
\end{equation}
where $\theta_+=1/B(\nu)$ (here $1/0$ is interpreted as $\infty$).
\end{definition}
To simplify the statements such as reparameterization  \eqref{Kernel family2}  of $ \mathcal{K}_+(\nu)$, we chose to exclude $\theta=0$ so that $\nu\not\in\mathcal{K}_+(\nu)$.
Alternatively, one could include $\nu$ as $P_0$ in the family $ \mathcal{K}_+(\nu)$; then one would need to write \eqref{Kernel family2}
 with  the left-closed domain of means $[m_0,m_+)$ and one would need to allow also the extended value
$m_0=-\infty$.

Similarly, one may define the one-sided \CSK family for a generating measure $\nu$ with support
bounded from below.
Then the one-sided \CSK family $\mathcal{K}_-(\nu)$ is defined
for $\theta_-<\theta< 0$, where $\theta_-$ is either $1/A(\nu)$ or $-\infty$.
Finally, if $\nu$ has compact support, then the natural
domain for the parameter $\theta$ of the two-sided \CSK family
$\mathcal{K}(\nu)=\mathcal{K}_-(\nu)\cup \mathcal{K}_+(\nu)\cup\{\nu\}$ is
$\theta_-<\theta<\theta_+$. For definiteness, we concentrate on the
case of measures bounded from above, and we explicitly allow generating
measures with unbounded support, and perhaps without
moments.

\section{Parameterizations by the mean}
Since $\theta> 0$,  the mean $m(\theta)=\int x
P_\theta(dx)$ exists for all measures  in \eqref{Kernel family}.  A calculation
gives
\begin{equation}\label{L2m}
m(\theta)=\frac{M(\theta)-1}{\theta M(\theta)}.
\end{equation}
Indeed,
\begin{equation*}
m(\theta)%=\int_{(-\infty,b]}\frac{x}{M(\theta)(1-\theta x)}\nu(dx)=
%\int_{(-\infty,b]}\frac{\theta x}{\theta M(\theta)(1-\theta
%x)}\nu(dx)\\
= \int_{(-\infty,b]}\frac{1-(1-\theta x)}{\theta
M(\theta)(1-\theta x)}\nu(dx)
%\\=
%\frac{1}{\theta}\int_{(-\infty,b]}\frac{1}{M(\theta)(1-\theta
%x)}\nu(dx)-\frac{1}{\theta
%M(\theta)}\int_{(-\infty,b]}1\,\nu(dx)
=\frac{1}{\theta}-\frac{1}{\theta
M(\theta)}.
\end{equation*}
%(We do not assume that $\nu$ has the mean, so $m(0)$ may be
%$-\infty$.)

To verify that $\theta\mapsto M(\theta)$ is differentiable on
$(0,\theta_+)$, we check that one can differentiate under the
integral sign. For this, we first observe that since $\mbox{supp}(
\nu)\subset (-\infty,B]$, for $\theta\in(0,\theta_+)$ the expression
$1-\theta x$ is positive for all $x$ from the support of $\nu$. So
\begin{multline*}
 \int \frac{|x|}{|1-\theta x|^2}\nu(dx)
%=\frac{1}{\theta}\int \frac{|\theta x-1+1|}{|1-\theta x|^2}\nu(dx)
\leq
\frac{1}{\theta}\int \frac{|\theta x-1|+1}{|1-\theta x|^2}\nu(dx)
\\=\frac{1}{\theta}\int \frac{(1-\theta x)+1}{(1-\theta
x)^2}\nu(dx)
%\\ \leq
%\frac{1}{\theta}\int \frac{1}{1-\theta
%x}\nu(dx)+\frac{1}{\theta}\int \frac{1}{(1-\theta x)(1-\theta b)}\nu(dx)
\leq \frac{M(\theta)}{\theta}+\frac{M(\theta)}{\theta(1-\theta
B)}<\infty.
\end{multline*}
Now fix $0<\alpha<\beta<\theta_+$. Then, since for $x$ in the
support of $\nu$, the map $\theta\mapsto \frac{\partial}{\partial
\theta}\left(\frac{1}{1-\theta x}\right)=\frac{x}{(1-\theta x)^2}$
is increasing on $(0,\theta_+)$, we have

$$\frac{x}{(1-\alpha x)^2}\leq \frac{x}{(1-\theta x)^2}\leq\frac{x}{(1-\beta
x)^2},$$ for all $\theta \in [\alpha,\beta]$. \\Define for $x$ in the support
of $\nu$,
$$g(x)=\frac{|x|}{(1-\alpha x)^2}+\frac{|x|}{(1-\beta x)^2}.$$
Then we have that $g\geq0$, $g$ is $\nu-$integrable, because $\alpha$ and
$ \beta$ are in $(0,\theta_+)$, and $\left|\frac{\partial}{\partial
\theta}\left(\frac{1}{1-\theta x}\right)\right|=\frac{|x|}{(1-\theta
x)^2}\leq g(x)$, for all $\theta \in [\alpha,\beta]$.
 With this, we have that $\theta\mapsto M(\theta)$ is differentiable on
$(0,\theta_+)$, and we can differentiate under the integral sign.
The reasoning from \cite{Bryc-06-08} can now be repeated  to show that
$m(\theta)$ is increasing. Indeed, differentiating \eqref{L2m} we get
$$m'(\theta)=\frac{M(\theta)+\theta M'(\theta)-(M(\theta))^2}{\theta^2(M(\theta))^2}.$$
Since $\nu$ is non-degenerate,
\begin{multline*}
M(\theta)+\theta M'(\theta)-(M(\theta))^2\\
=\int\frac{1}{(1-\theta x)^2}\nu(dx)-\left(\int\frac{1}{1-\theta
x}\nu(dx)\right)^2>0
\end{multline*}
for all $0<\theta<\theta_+$. Thus the function $\theta\mapsto
m(\theta)$ is increasing on $(0,\theta_+)$. Denoting by $\psi$  the
inverse function, we are thus lead to parametrization  of
$\mathcal{K}_+(\nu)$  by the mean,
\begin{equation}\label{Kernel family2}
\mathcal{K}_+(\nu)=\left\{Q_m(dx)=P_{\psi(m)}(dx): m\in
(m_0,m_+)\right\},
\end{equation}
where the so called domain of means $(m_0,m_+)$ is the image under
 $\theta\mapsto m(\theta)$ of the interval $(0,\theta_+)$. It is
clear that $m_+\leq \sup\mbox{supp}(\nu)$ and
$$m_0=\lim_{\theta\searrow 0} m(\theta)=%M'(0)=
\int x \nu(dx)\geq -\infty.$$
%\fbox{Now need to explain how $M'(0)=-\infty$!!!}
We will give alternative
representations for $m_0$ and $m_+$ later in the paper, see
 Proposition \ref{P-R}(iii) and Proposition \ref{P-DM}.

\section{The pseudo-variance function}
The variance function
\begin{equation}
  \label{Def Var}
  V(m)=\int (x-m)^2 Q_m(dx)
\end{equation} is the fundamental concept of the theory of
 exponential families, and also of the theory of \CSK families as
 presented in \cite{Bryc-06-08}.  Unfortunately, if  $\nu$
does not have the first moment (which is the case for free
 $1/2$-stable laws), all measures in the  \CSK family generated by
 $\nu$ have infinite variance.
 %% no need for this comment?
% (This issue does not arise in
% exponential families generated by measures with support
%bounded from above, as the exponential kernel forces exponential moments.)
We therefore introduce the following substitute which coincides with the variance function when the mean of $\nu$ is zero.
\begin{definition}
For $m\in(m_0,m_+)$,   the {\em pseudo-variance function} is defined
as
\begin{equation}\label{m2v}
\V(m)=m\left(\frac{1}{\psi(m)}-m\right).
\end{equation}
\end{definition}
Note that  the pseudo-variance function  may take negative values:
$\V(m)$ is negative  for all $m_0<m<0$, as in this case $1/\psi(m)-m>0$.

Since \eqref{m2v} is difficult to interpret, we will clarify in Proposition \ref{P.3.2}   how the pseudo-variance is related to the variance function when $\int |x|\nu(dx)<\infty$. To derive this relation, we need first to   point out how
the pseudo-variance function enters into the
representation of a \CSK family as a free-exponential or
$q$-exponential family, compare \cite{Bryc-Ismail-05}.
  \begin{proposition}
 Suppose $\V$  is a pseudo-variance of the \CSK family
 $\mathcal{K}(\nu)$ generated by a probability measure $\nu$ with
 support in $(-\infty,b]$ for some $b\in\RR$.  The explicit
 parametrization of  $\mathcal{K}_+(\nu)$ by the mean
\eqref{Kernel family2} is as follows:
   \begin{equation}
  \label{F(V)}
%\mathcal{K}_+(\nu)=\left\{\frac{\V(m)}{\V(m)+m(m-x)}\nu(dx):\; m\in(m_0,m_+)\right\}.
Q_m(dx)=\frac{\V(m)}{\V(m)+m(m-x)}\nu(dx).
\end{equation}

 \end{proposition}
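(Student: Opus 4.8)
The plan is to express the Radon--Nikodym derivative of $Q_m=P_{\psi(m)}$ with respect to $\nu$, given by \eqref{Kernel family} as $1/\big(M(\psi(m))(1-\psi(m)x)\big)$, entirely in terms of $m$ and $x$, and then to check that it agrees with the right-hand side of \eqref{F(V)}. Writing $\theta=\psi(m)$ throughout, the first step is to eliminate $M(\theta)$. Solving \eqref{L2m} for $M(\theta)$ gives $M(\theta)(1-\theta m)=1$, so that
\[
M(\psi(m))=\frac{1}{1-\psi(m)\,m},\qquad\text{equivalently}\qquad 1-\psi(m)\,m=\frac{1}{M(\psi(m))}.
\]

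Next I would rewrite the pseudo-variance \eqref{m2v} over a common denominator,
\[
\V(m)=m\left(\frac{1}{\psi(m)}-m\right)=\frac{m\,(1-\psi(m)\,m)}{\psi(m)},
\]
and compute the denominator appearing in \eqref{F(V)}. Putting the two terms of $\V(m)+m(m-x)$ over $\psi(m)$ and expanding, the quadratic terms $\pm\,\psi(m)m^2$ cancel, leaving
\[
\V(m)+m(m-x)=\frac{m\,(1-\psi(m)\,x)}{\psi(m)}.
\]
Forming the quotient, the common factor $m/\psi(m)$ cancels and one obtains
\[
\frac{\V(m)}{\V(m)+m(m-x)}=\frac{1-\psi(m)\,m}{1-\psi(m)\,x}.
\]
Substituting $1-\psi(m)\,m=1/M(\psi(m))$ from the first step reproduces exactly $1/\big(M(\psi(m))(1-\psi(m)x)\big)$, which is the density of $P_{\psi(m)}=Q_m$ relative to $\nu$ in \eqref{Kernel family}. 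This establishes \eqref{F(V)}.

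The argument is a short chain of substitutions, so there is no real analytic difficulty; the only point requiring care is the cancellation of the factor $m/\psi(m)$, which is legitimate for $m\neq 0$. At $m=0$ both numerator and denominator on the right of \eqref{F(V)} vanish, but the reduced expression $\big(1-\psi(m)\,m\big)/\big(1-\psi(m)\,x\big)$ remains valid and continuous there (with $M(\psi(0))=1$), so \eqref{F(V)} is to be understood through this cancelled form.
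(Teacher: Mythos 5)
Your proof is correct and follows essentially the same route as the paper's: both arguments are the same short algebraic elimination of $M(\psi(m))$ and $\psi(m)$ from the density in \eqref{Kernel family} using \eqref{L2m} and \eqref{m2v}, merely organized differently (the paper substitutes $\psi(m)=m/(m^2+\V(m))$ into $1-\theta x$, while you clear the denominator $\psi(m)$). Your closing remark about the removable $0/0$ at $m=0$ is a legitimate point of care that the paper's version glosses over.
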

 \begin{proof} For completeness, we include a version of the argument
 implicit in \cite{Bryc-06-08}: If $\theta=\psi(m)=m/(m^2+\V(m))$, then from
 \eqref{m2v} we get
  $$\frac{1}{M(\theta)(1-\theta x)}=\frac{m^2+\V(m)}{M(\theta)(\V(m)+m(m-x))}.$$
From \eqref{L2m}, $1/M(\theta)=1-\psi(m)m=\V(m)/(m^2+\V(m))$,
proving \eqref{F(V)}.
\end{proof}

The following result shows that the pseudo-variance functions are closely related to the variance functions.
\begin{proposition}\label{P.3.2}
 If $\nu$ is non-degenerate with support bounded from above, and
 with finite first moment  $m_0=\int x \nu(dx)$ then the variance function
 $V$ as defined by \eqref{Def Var} exists, and
 $$\V(m)=\frac{m}{m-m_0}V(m)$$
  for $m\in(m_0,m_+)$.   In particular, if $m_0=0$ then $\V(m)=V(m)$ on $(0,m_+)$.
\end{proposition}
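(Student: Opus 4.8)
The plan is to establish the relation $\V(m)=\frac{m}{m-m_0}V(m)$ by computing both $\V(m)$ and $V(m)$ explicitly and comparing them, using the parametrization \eqref{F(V)} as the common bridge. First I would verify that the variance function $V(m)$ is well-defined: since $\theta=\psi(m)>0$ and $\mathrm{supp}(\nu)\subset(-\infty,B]$, the weight $1/(M(\theta)(1-\theta x))$ is bounded on the support (it is at most $1/(M(\theta)(1-\theta B))$), so $Q_m$ inherits from $\nu$ the existence of all moments that $\nu$ possesses; in particular $\int|x|\,\nu(dx)<\infty$ guarantees $\int|x|\,Q_m(dx)<\infty$, and I would need the second moment to be finite as well, which follows because near the support the Radon--Nikodym derivative $dQ_m/d\nu$ is bounded and one can compare $\int x^2\,Q_m(dx)$ against a finite quantity built from $M(\theta)$ and $M'(\theta)$ exactly as in the differentiation-under-the-integral estimate already carried out in Section 2.

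Next I would rewrite the definition \eqref{Def Var}. Expanding the square gives
\begin{equation*}
V(m)=\int(x-m)^2\,Q_m(dx)=\int x^2\,Q_m(dx)-2m\!\int x\,Q_m(dx)+m^2,
\end{equation*}
and since $\int x\,Q_m(dx)=m$ by the definition of the mean parametrization, this collapses to $V(m)=\int x^2\,Q_m(dx)-m^2$. The central computation is therefore to evaluate $\int x^2\,Q_m(dx)$. I would substitute the explicit form \eqref{F(V)}, namely $dQ_m=\frac{\V(m)}{\V(m)+m(m-x)}\,d\nu$, and exploit the algebraic identity that lets one decompose $\frac{x^2}{\V(m)+m(m-x)}$ into a constant plus lower-order terms in $x$ divisible by the denominator. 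The key is that integrating $1$, $x$, and the combination $\V(m)+m(m-x)$ against the weighted measure produces, respectively, the total mass $1$, the mean $m$, and a multiple of $\int\nu(dx)$; the first moment $m_0=\int x\,\nu(dx)$ of $\nu$ itself enters precisely when one integrates $x$ against the \emph{unweighted} measure $\nu$.

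The cleanest route is to connect the two first moments directly. From \eqref{F(V)} one has the pointwise identity $\V(m)\,\nu(dx)=\big(\V(m)+m(m-x)\big)Q_m(dx)$; integrating both sides against $1$ and against $x$ yields two linear relations among $\int x\,\nu(dx)=m_0$, $\int x\,Q_m(dx)=m$, and $\int x^2\,Q_m(dx)$. Solving these for $\int x^2\,Q_m(dx)$ and substituting into $V(m)=\int x^2\,Q_m(dx)-m^2$ should produce $V(m)=\frac{m-m_0}{m}\V(m)$, which is the claimed formula after rearrangement. The main obstacle is bookkeeping rather than conceptual: I must confirm the moment integrals are finite (so that the linear relations are legitimate, not formal) and handle the factor $m$ in the denominator carefully, noting that $m=0$ is excluded or treated as the limiting case. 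The special case $m_0=0$ then follows immediately, since the factor $\frac{m}{m-m_0}$ reduces to $1$ on $(0,m_+)$, recovering $\V(m)=V(m)$.
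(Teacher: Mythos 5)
Your argument is correct and is essentially the paper's own proof in a slightly rearranged form: the paper integrates the single identity $xm(x-m)=x\V(m)-x\bigl(\V(m)+m(m-x)\bigr)$ against $\frac{1}{\V(m)+m(m-x)}\nu(dx)$, which is exactly the linear relation you obtain by integrating $x$ against both sides of $\V(m)\,\nu(dx)=\bigl(\V(m)+m(m-x)\bigr)Q_m(dx)$, so both routes reduce to $mV(m)=\V(m)(m-m_0)$. One small caution on your integrability step: finiteness of $\int x^2\,Q_m(dx)$ does not follow from boundedness of $dQ_m/d\nu$ alone (since $\nu$ need not have a second moment), nor from the $M'(\theta)$ estimate of Section 2 (which controls $\int |x|(1-\theta x)^{-2}\,\nu(dx)$, a different quantity); the clean justification is the decomposition $\frac{x^2}{1-\theta x}=-\frac{x}{\theta}+\frac{x}{\theta(1-\theta x)}$, both terms being $\nu$-integrable once $\int|x|\,\nu(dx)<\infty$.
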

\begin{proof} For completeness, we include a version of the argument
 presented in \cite{Bryc-06-08}. A  different argument based on
 \eqref{BIS} is presented in \cite{Bryc-Ismail-05}.

If $\nu$ has the first moment, then $m_0=\int x\nu(dx)$ is a real
number and $x(x-m)$ is integrable with respect to measures
$Q_m\in\mathcal{K}(\nu)$ for all $m\in(m_0,m_+)$. The following
calculation gives the answer:
\begin{multline*}
V(m)=\int x(x-m)Q_m(dx)=\int \frac{x(x-m)\V(m)}{\V(m)+m(m-x)}\nu(dx)\\
=
\frac{\V(m)}{m}\int \frac{xm (x-m)}{\V(m)+m(m-x)}\nu(dx)\\
=
\frac{\V(m)}{m}\int \frac{ x \V(m)-x(\V(m)+m(m-x) )}{\V(m)+m(m-x)}\nu(dx)\\
=\frac{\V(m)}{m}\left(\int x Q_m(dx)-\int x
\nu(dx)\right)=\frac{\V(m)}{m}(m-m_0).
\end{multline*}

\end{proof}

\begin{remark}  If
  $\nu$ has also finite second moment and $m_0\ne 0$,  then $\V(m)$
 has a simple pole at $m=m_0$. In Corollary \ref{C-V} we will verify
 that $m/\V(m)$ is positive, continuous, and increasing on
 $(m_0,m_+)$, so this must be the only pole.
\end{remark}

\begin{remark}
Using the finite difference operator $$(\Delta_m
f)(m):=\frac{f(m)-f(0)}{m},$$ it is straightforward to verify that
when $0\in(m_0,m_+)$ the density $g(m,x):=\frac{\V(m)}{\V(m)+m(m-x)} $ satisfies the
equation
\begin{equation}\label{BIS}
\Delta_m g(m,x)=\frac{x-m}{\V(m)}g(m,x).
\end{equation}
This is a finite-difference  analog of the differential equation
satisfied by the density of an exponential family \cite{Wedderburn74}. In
\cite{Bryc-Ismail-05}, this equation is used to verify directly from
 \eqref{F(V)} that $m$ is the mean and that $\V(m)$ is the variance
 function when $\int x\nu(dx)=0$.
 \end{remark}

%There is also a  simple way to relate a generating measure to the
%pseudo-variance function:  given $\V(m)$, solve \eqref{m2v} for
%$\psi$, find the inverse function $m(\theta)$, and solve \eqref{L2m}
%for $M(\theta)$. This effectively determines the distribution via
%Stieltjes inversion formula \eqref{SIF}.
In principle, the generating measure $\nu$ can be determined from the
 pseudo-variance function $\V$ by the following method.  Given
 $\V(m)$, solve \eqref{m2v} for
$\psi$, find the inverse function $m(\theta)$, and solve \eqref{L2m}
for $M(\theta)$. This effectively determines the distribution via
Stieltjes inversion formula \eqref{SIF}.
 To ensure that this
procedure indeed works we need several technical results.

\begin{proposition}\label{P.z}
 Suppose $\V$ is the pseudo-variance function of a \CSK family
 generated by a probability measure with  $B=B(\nu)<\infty$
 (recall \eqref{B(nu)}).  Let
\begin{equation}\label{z2m}
 z=z(m)=m+\frac{\V(m)}{ m}.
 \end{equation}
 Then $m\mapsto  z(m)$ is
 continuous, strictly decreasing on $(m_0,m_+)$,
 $z(m)>0$ on  $(m_0,m_+)$,  $z(m)\nearrow \infty$ as $m\searrow m_0$,
 and $z(m)\searrow B$ as $m\nearrow m_+$.
\end{proposition}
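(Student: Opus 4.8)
The plan is to reduce the entire proposition to the single observation that $z(m)$ is nothing but the reciprocal of the parameter $\psi(m)$. Rewriting the defining relation \eqref{m2v} as $\V(m)/m = 1/\psi(m)-m$ and substituting into \eqref{z2m} gives
\[
z(m)=m+\frac{\V(m)}{m}=\frac{1}{\psi(m)}.
\]
Once this identity is in hand, every assertion becomes a statement about the function $\psi$, whose relevant properties were already established in Section 2.

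Next I would recall those structural facts. In Section 2 it is shown that $\theta\mapsto m(\theta)$ is continuous and strictly increasing on $(0,\theta_+)$, and the domain of means $(m_0,m_+)$ is by definition its image. Consequently $m(\cdot)$ is a homeomorphism of $(0,\theta_+)$ onto $(m_0,m_+)$, and its inverse $\psi$ is a continuous, strictly increasing bijection from $(m_0,m_+)$ onto $(0,\theta_+)$; in particular $\psi(m)\in(0,\theta_+)$ for every $m\in(m_0,m_+)$. With these ingredients the claims follow directly: since $\psi$ is continuous, strictly increasing and strictly positive, its reciprocal $z=1/\psi$ is continuous and strictly decreasing, and $z(m)=1/\psi(m)>0$ because $\psi(m)>0$, which disposes of continuity, monotonicity and positivity at once. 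For the endpoint limits I would use that a homeomorphism between open intervals matches up endpoints: recalling that $m_0=\lim_{\theta\searrow0}m(\theta)$, as $m\searrow m_0$ we have $\psi(m)\searrow 0$, so $z(m)\nearrow\infty$, and as $m\nearrow m_+$ we have $\psi(m)\nearrow\theta_+=1/B$, so $z(m)=1/\psi(m)\searrow 1/\theta_+=B$.

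There is no genuinely hard step; the content of the proposition is entirely packaged in the identity $z(m)=1/\psi(m)$ together with the already-proven monotonicity of $m(\theta)$. The only point demanding a little care is the last limit. When $B>0$ the value $1/\theta_+=B$ is read off directly, but when $B=0$ one has $\theta_+=\infty$, and the claimed limit $z(m)\searrow B=0$ must instead be recovered from $z(m)=1/\psi(m)$ and $\psi(m)\nearrow\infty$ via the convention $1/0=\infty$ adopted in the definition of $\theta_+$. Writing the conclusion in the single form $z(m)\searrow 1/\theta_+=B$ then covers the $B>0$ and $B=0$ cases uniformly.
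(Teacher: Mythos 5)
Your proof is correct and follows essentially the same route as the paper's: both rest on the identity $z(m)=1/\psi(m)$ read off from \eqref{m2v}, combined with the continuity and strict monotonicity of $\theta\mapsto m(\theta)$ established in Section 2. You are in fact slightly more complete than the paper's one-line argument, which does not explicitly treat the limit $z(m)\searrow B$ as $m\nearrow m_+$; your derivation via $\psi(m)\nearrow\theta_+=1/B$ (with the $B=0$, $\theta_+=\infty$ case handled separately) fills that in correctly.
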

 \begin{proof}Rewriting \eqref{m2v} we see that $z(m)=1/\psi(m)$ is
 strictly decreasing,  positive and increases without bound as
 $m\searrow m_0$.
 Clearly,  as an inverse of the differentiable function
 $\theta\mapsto m(\theta)$, function $\psi(m)$ is continuous.
\end{proof}

\subsection{Cauchy transform}
The Cauchy transform of a probability measure $\nu$  on Borel sets
of $\RR$ is  an analytic mapping $G$ from the upper complex
half-plane $\CC^+$ into the lower half-plane $\CC^-$ given by
\begin{equation}\label{G-transform}
G_\nu(z)=\int\frac{1}{z-x}\nu(dx).
\end{equation}
(We will drop subscript $\nu$ when the measure is clear from context.) It is known (see \cite{Akhiezer:1965} or  \cite[Proposition
5.1]{Bercovici:1993}) that an analytic function $G:\CC^+\to\CC^-$ is
a Cauchy transform if and only if
\begin{equation}
\lim_{t\to\infty} it {G(it)}=1
\end{equation}
and  that the corresponding probability measure $\nu$ is determined
uniquely from the Stieltjes inversion formula
\begin{equation}\label{SIF}
\nu(dx)=\lim_{\eps\searrow 0} \frac{-\Im G(x+i\eps)}{\pi} dx,
\end{equation}
where the limit is in the sense of weak convergence of measures.
(See e.g., \cite[page 125]{Akhiezer:1965}.)

For a probability measure $\nu$  with support in $(-\infty,b]$,  $G_\nu$ is
analytic on the slit complex plane $\CC\setminus(-\infty,b]$. This
shows that  a probability measure  $\nu$ with support bounded from
above is determined uniquely by $G_\nu(z)$ on $z\in(b,\infty)$ for
some $b$.

Furthermore, $\lim_{z\to\infty }zG(z)=1$, and since $G$ is non-negative and decreasing on $(b,\infty)$, the limit
$\lim_{z\searrow b}G_\nu(z)$   exists as an extended number in $(0,\infty]$. It  will be convenient to write $1/G(b)\in[0,\infty)$ for the limit $\lim_{z\searrow b}1/G_\nu(z)$
even if  $G(z)$ is undefined at $z=b$.
% \\ \fbox{Should we follow Voiculescu \cite{Bercovici:1993} and Maassen \cite{Maassen:1992} in using $F(z)=1/G(z)$?}

The following shows how  the upper end of the domain of means is
related directly to Cauchy transform.
\begin{proposition}\label{P-DM}
If $\nu$ has support bounded from above with $B=B(\nu)<\infty$ given by \eqref{B(nu)}, then
$m_+=B-1/G(B)$. (Here $1/G(B):=\lim_{z\searrow B} 1/G(z)$ can be
zero.) \end{proposition}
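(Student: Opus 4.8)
The plan is to pass from the parameter $\theta$ to the variable $z=1/\theta$ and to express both $M(\theta)$ and the mean $m(\theta)$ through the Cauchy transform $G$ of \eqref{G-transform}; then $m_+$ will appear as a one-sided boundary limit of $G$ at $B$. First I would record the elementary link between $M$ and $G$. For $\theta\in(0,\theta_+)$ set $z=1/\theta$, so that $z$ ranges over $(B,\infty)$ because $\theta_+=1/B$. Since $\mbox{supp}(\nu)\subset(-\infty,B]$ keeps $z-x$ positive and bounded away from $0$ for such $z$, factoring $z$ out of the integrand gives
\begin{equation*}
M(\theta)=\int\frac{1}{1-\theta x}\,\nu(dx)=\int\frac{z}{z-x}\,\nu(dx)=z\,G(z).
\end{equation*}

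Next I would substitute this identity into the mean formula \eqref{L2m}. Writing $1/\theta=z$ and $1/(\theta M(\theta))=z/(z\,G(z))=1/G(z)$, formula \eqref{L2m} collapses to the clean expression
\begin{equation*}
m(\theta)=\frac{1}{\theta}-\frac{1}{\theta M(\theta)}=z-\frac{1}{G(z)},\qquad z=1/\theta .
\end{equation*}
We have already shown that $\theta\mapsto m(\theta)$ is increasing on $(0,\theta_+)$, so that $m_+$ is the supremum of $m$, i.e. $m_+=\lim_{\theta\nearrow\theta_+}m(\theta)$; under the change of variable $z=1/\theta$ this is exactly the limit $z\searrow B$. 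The discussion preceding the proposition guarantees that $\lim_{z\searrow B}1/G(z)$ exists as a number in $[0,\infty)$, so I can pass to the limit termwise and obtain
\begin{equation*}
m_+=\lim_{z\searrow B}\Bigl(z-\frac{1}{G(z)}\Bigr)=B-\frac{1}{G(B)} .
\end{equation*}

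I do not expect a genuine analytic obstacle here; the content of the statement is the identity $m=z-1/G(z)$, and everything else is bookkeeping. The two points that deserve a word of care are, first, confirming that $\theta\in(0,\theta_+)$ corresponds precisely to $z\in(B,\infty)$, so that the two monotone limits $\theta\nearrow\theta_+$ and $z\searrow B$ really coincide; and second, accommodating the degenerate case $G(B)=\infty$, where $1/G(B)=0$ and the formula reads $m_+=B$. Both are handled by the monotonicity and positivity of $G$ on $(B,\infty)$ recalled just before the statement, which is precisely what makes the boundary limit automatic.
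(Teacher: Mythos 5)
Your proposal is correct and follows essentially the same route as the paper: the paper's proof likewise uses the identity $M(\theta)=\tfrac{1}{\theta}G(1/\theta)$ together with \eqref{L2m} to write $m(\theta)=1/\theta-1/G(1/\theta)$ and then passes to the limit $\theta\nearrow 1/B$, i.e.\ $z\searrow B$. Your version merely spells out the change of variable and the boundary-limit bookkeeping in more detail.
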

\begin{proof} Since
$M(\theta)=\frac{1}{\theta} G\left(\frac{1}{\theta}\right)$
from \eqref{L2m} we get
$m_+=\lim_{\theta\nearrow 1/B}m(\theta)= \lim_{\theta\nearrow
1/B}\left(1/\theta -1/G(1/\theta)\right)=B-1/G(B)$.
\end{proof}

\begin{remark}[Precise domain of means]\label{R.PDM}
If $\nu$ is compactly supported and $A=\min\{0,\inf
 \mbox{supp}(\nu)\}$ and  $B=B(\nu)$, then the domain of means for
 the two sided \CSK family generated by $\nu$ is  the interval $(m_-,m_+)$ with
$m_-=A-1/G(A)$, $m_+=B-1/G(B)$. %%%Confirmed for semicircle
This gives a more precise information about the domain of means considered in
 \cite[Theorem 3.1]{Bryc-06-08}.
\end{remark}

From the fact that measures \eqref{F(V)} integrate to $1$, we get
the following (see \cite[Theorem 3.1]{Bryc-06-08}).
\begin{proposition}\label{P1.1}
Suppose $\V$  is the pseudo-variance function of the \CSK family
$\mathcal{K}_+(\nu)$ generated by a probability measure $\nu$ with
support bounded from above.
For $z$ given by \eqref{z2m}, the Cauchy-Stieltjes transform of
$\nu$ satisfies  \begin{equation}
  \label{G2V}
  G(z)=\frac{m}{\V(m)}.
\end{equation}
In particular,
$0<\frac{m}{\V(m)}<G(B(\nu))$. The generating measure
$\nu$ is determined uniquely by the pseudo-variance function
$\V(\cdot)$.
\end{proposition}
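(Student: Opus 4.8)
The plan is to read off \eqref{G2V} directly from the normalization of the measures in \eqref{F(V)}, as the sentence preceding the statement suggests. Each $Q_m$ is a probability measure, so integrating \eqref{F(V)} against $\nu$ gives
\begin{equation*}
1=\int Q_m(dx)=\V(m)\int\frac{\nu(dx)}{\V(m)+m(m-x)}.
\end{equation*}
The key step is to rewrite the denominator using \eqref{z2m}: since $z=m+\V(m)/m$ we have $mz=m^2+\V(m)$, whence $\V(m)+m(m-x)=mz-mx=m(z-x)$. Substituting this, the normalization becomes $1=\tfrac{\V(m)}{m}\int(z-x)^{-1}\nu(dx)$. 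By Proposition \ref{P.z} we have $z=z(m)>B\ge\sup\mbox{supp}(\nu)$, so $z-x\ge z-B>0$ on the support of $\nu$; hence the integrand is bounded, the integral converges, and by \eqref{G-transform} it equals $G(z)$. Solving for $G$ gives $G(z)=m/\V(m)$, which is \eqref{G2V}.

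For the bounds in the ``in particular'' clause I would combine Proposition \ref{P.z} with the monotonicity of the Cauchy transform. Proposition \ref{P.z} gives $z(m)\in(B,\infty)$ with $z(m)\searrow B$ as $m\nearrow m_+$, and on $(B,\infty)$ the function $G$ is positive with derivative $-\int(z-x)^{-2}\nu(dx)<0$, hence strictly decreasing. Therefore $0<G(z(m))<\lim_{w\searrow B}G(w)=G(B)$, which via \eqref{G2V} is exactly $0<m/\V(m)<G(B(\nu))$.

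For uniqueness I would argue that \eqref{G2V} recovers $G$ on the entire ray $(B,\infty)$ from $\V$ alone. By Proposition \ref{P.z} the map $m\mapsto z(m)$ of \eqref{z2m} is continuous, strictly decreasing, and carries the domain $(m_0,m_+)$ of $\V$ bijectively onto $(B,\infty)$; composing its inverse with $m\mapsto m/\V(m)$ therefore determines $G$ at every point of $(B,\infty)$ using only $\V$. As recalled after \eqref{SIF}, a probability measure with support bounded from above is uniquely determined by the values of its Cauchy transform on any ray lying strictly to the right of its support. Here $B=\max\{0,\sup\mbox{supp}(\nu)\}\ge\sup\mbox{supp}(\nu)$ and $z(m)>B$ strictly by Proposition \ref{P.z}, so $(B,\infty)$ does lie to the right of $\mbox{supp}(\nu)$; hence $G|_{(B,\infty)}$, and with it $\nu$, is pinned down by $\V$. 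The mathematics is light throughout, and I expect the only point needing care to be precisely this bookkeeping --- ensuring the reconstruction ray sits to the right of the support so that the uniqueness statement following \eqref{SIF} genuinely applies.
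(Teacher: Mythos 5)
Your proof is correct and follows essentially the same route as the paper: integrate \eqref{F(V)} using the identity $\V(m)+m(m-x)=m(z-x)$ to get \eqref{G2V}, use Proposition \ref{P.z} together with the positivity and monotonicity of $G$ on $(B(\nu),\infty)$ for the bounds, and recover $\nu$ from the values of $G$ on the ray $(B,\infty)$ (the paper phrases this last step as uniqueness of analytic extension plus \eqref{SIF}, which is the same fact you invoke from the discussion following \eqref{SIF}). Your write-up merely makes the algebra and the bookkeeping about the support explicit.
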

\begin{proof}  Integrating each measure in \eqref{F(V)}, we get
 \eqref{G2V}. Since $G$ is positive and decreasing on
 $(B(\nu),\infty)$, from  \eqref{G2V} we
see that $0<m/\V(m)<G(B(\nu))$ on $(m_0,m_+)$.
 By Proposition \ref{P.z}, this determines $G(z)$ on an interval.
 Thus by the uniqueness of analytic extension, this determines $G(z)$
 for all $z\in\CC^+$, and hence it determines $\nu$ via \eqref{SIF}.
\end{proof}
\begin{corollary}\label{C.V/m}
If $\V$ is a pseudo-variance function of a \CSK family generated by
probability measure with support bounded from above, then
$\frac{m}{\V(m)}\to 0$  and $\frac{m^2}{\V(m)}\to 0$ as $m\searrow
m_0$.
\end{corollary}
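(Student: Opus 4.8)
The plan is to translate both limits into statements about the behaviour of the Cauchy transform $G$ near infinity, exploiting the identity \eqref{G2V} together with the monotone correspondence from Proposition \ref{P.z}. Indeed, by that proposition $z(m)\nearrow\infty$ as $m\searrow m_0$, so any limit as $m\searrow m_0$ can be rewritten as a limit as $z\nearrow\infty$, where the normalization $\lim_{z\to\infty}zG(z)=1$ recorded in the Cauchy transform subsection is available.

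For the first limit this is immediate. By \eqref{G2V} we have $m/\V(m)=G(z)$ with $z=z(m)$, and as $m\searrow m_0$ we have $z\nearrow\infty$. Since $zG(z)\to 1$ forces $G(z)\to 0$, I conclude $m/\V(m)=G(z)\to 0$ at once.

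The second limit is the main obstacle, precisely because $m_0$ is allowed to be $-\infty$; in that regime $m^2/\V(m)=m\cdot\bigl(m/\V(m)\bigr)$ is an indeterminate product of the form $(-\infty)\cdot 0$, so the first limit alone does not settle it. The key step is to remove the indeterminacy by expressing the quantity purely through $G$. Combining the definition \eqref{z2m} of $z$ with \eqref{G2V} yields $1/G(z)=\V(m)/m=z-m$, that is $m=z-1/G(z)$, whence
\begin{equation*}
\frac{m^2}{\V(m)}=m\cdot\frac{m}{\V(m)}=m\,G(z)=zG(z)-1.
\end{equation*}
Letting $m\searrow m_0$, so that $z\nearrow\infty$, and invoking $zG(z)\to 1$ once more gives $m^2/\V(m)=zG(z)-1\to 0$. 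I expect the only remaining subtlety to be the bookkeeping of the passage $m\searrow m_0\Leftrightarrow z\nearrow\infty$, which is already supplied by Proposition \ref{P.z}; note that the exact identity $m^2/\V(m)=zG(z)-1$ holds on all of $(m_0,m_+)$, so no separate treatment of the finite and infinite $m_0$ cases is required.
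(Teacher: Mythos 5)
Your proof is correct and follows essentially the same route as the paper: both arguments use Proposition \ref{P.z} to convert $m\searrow m_0$ into $z\nearrow\infty$, identify $m/\V(m)=G(z)$ via \eqref{G2V}, and observe that $zG(z)=1+m^2/\V(m)\to 1$. Your derivation of the identity $m^2/\V(m)=zG(z)-1$ is just a slightly more explicit version of the step the paper states in one line.
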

\begin{proof} %We use  \eqref{z2m}.
By Proposition \ref{P.z}, $z(m)\to \infty$. So \eqref{G2V} and the properties of the Cauchy transform imply that
 $G(z(m))=m/\V(m)\to 0$ and $zG(z)=1+m^2/\V(m)\to 1$.
\end{proof}

\begin{remark} Proposition \ref{P1.1} shows that pseudo-variance
 function $m\mapsto \V(m)$
contains more information than the variance function $m\mapsto V(m)$: $\V$ is
 defined for $\nu$ without moments, and it determines the generating
 measure  $\nu$ without the need to supply its mean. In particular,
 for a two-sided family with the domain of
means $(m_-,m_+)$ as in Remark \ref{R.PDM}, its pseudo-variance
 function determines also $m_0=\int x\nu(dx)$.
\end{remark}

The following technical result is needed later on to verify that the
$R$-transform is strictly increasing on an interval. (The inequality
gives the lower bound for the difference quotient of $F=1/G$ on
$(b,\infty)$; compare \cite[Proposition 2.1]{Maassen:1992}.)
 \begin{proposition}\label{P-G}
 If $\nu$ is a non-degenerate probability measure with the support
 bounded above by $b\in\RR$, then for $b<z_1<z_2$,
 \begin{equation}\label{G-inequality}
 G(z_1)-G(z_2)>(z_2-z_1)G(z_1)G(z_2).
\end{equation}
\end{proposition}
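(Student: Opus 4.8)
The plan is to reduce the claimed inequality to the strict positivity of a covariance under $\nu$, and then to invoke the standard correlation (Chebyshev/FKG) argument for monotone functions. First I would put the two Cauchy transforms over a common denominator. Using $G(z)=\int (z-x)^{-1}\nu(dx)$ from \eqref{G-transform} together with the identity $\frac{1}{z_1-x}-\frac{1}{z_2-x}=\frac{z_2-z_1}{(z_1-x)(z_2-x)}$, I obtain
\[
G(z_1)-G(z_2)=(z_2-z_1)\int\frac{1}{(z_1-x)(z_2-x)}\,\nu(dx).
\]
Since $z_2-z_1>0$, dividing through shows that \eqref{G-inequality} is \emph{equivalent} to
\[
\int\frac{1}{(z_1-x)(z_2-x)}\,\nu(dx)>\Big(\int\frac{\nu(dx)}{z_1-x}\Big)\Big(\int\frac{\nu(dx)}{z_2-x}\Big).
\]
Here all integrals are finite because $\mathrm{supp}(\nu)\subset(-\infty,b]$ with $b<z_1<z_2$, so each factor $1/(z_i-x)$ is positive and bounded above by $1/(z_i-b)$ on the support.

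Next I would set $f(x)=1/(z_1-x)$ and $g(x)=1/(z_2-x)$ and observe that the displayed inequality is precisely the statement that $\int fg\,d\nu-\int f\,d\nu\int g\,d\nu>0$, i.e.\ that $f$ and $g$ are positively correlated under $\nu$. The decisive structural observation is that both $f$ and $g$ are \emph{strictly increasing} on $(-\infty,b]$: for $x$ in the support one has $z_i-x>0$, so $x\mapsto 1/(z_i-x)$ increases as $x$ increases.

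To establish strict positivity of the covariance I would use the symmetrization trick with two independent copies. Letting $X,Y$ be independent random variables with law $\nu$,
\[
2\Big(\int fg\,d\nu-\int f\,d\nu\int g\,d\nu\Big)=\mathbb{E}\big[(f(X)-f(Y))(g(X)-g(Y))\big].
\]
Because $f$ and $g$ are both increasing, the factors $f(X)-f(Y)$ and $g(X)-g(Y)$ always carry the same sign, so the integrand is nonnegative; by strict monotonicity it is moreover strictly positive whenever $X\ne Y$. The point where the non-degeneracy hypothesis is used — and the only step requiring genuine care — is in upgrading ``$\ge$'' to ``$>$'': since $\nu$ is not a point mass, $\mathbb{P}(X\ne Y)>0$, whence the expectation is strictly positive. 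This gives the strict inequality and completes the argument. I expect the main (and essentially only) obstacle to be this strictness bookkeeping, together with the verification that the integrals are finite; everything else is the routine covariance computation above.
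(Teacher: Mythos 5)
Your proof is correct and is essentially the paper's own argument: both start from the identity $G(z_1)-G(z_2)=(z_2-z_1)\int (z_1-x)^{-1}(z_2-x)^{-1}\nu(dx)$ and then establish strict positivity of the covariance of the two increasing functions $1/(z_1-x)$ and $1/(z_2-x)$ via the symmetrization identity over two independent copies. The only (cosmetic) difference is in how strictness is extracted from non-degeneracy — you use $\mathbb{P}(X\neq Y)>0$ directly, while the paper localizes the double integral to a box $(a,b]\times(a,b]$ of positive mass to get an explicit positive lower bound.
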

\begin{proof}
We have
\begin{equation}\label{Delta_G}
G(z_1)-G(z_2)=(z_2-z_1)\int_{(-\infty,b]}\frac{1}{(z_1-x)(z_2-x)}\nu(dx).
\end{equation}
Note that for $x,y\leq b$,
\begin{multline*}
\left(\frac{1}{z_1-x}-\frac{1}{z_1-y}\right)\left(\frac{1}{z_2-x}-\frac{1}{z_2-y}\right)\\
= \frac{(y-x)^2}{(z_1-x)(z_1-y)(z_2-x)(z_2-y)}\geq 0.
\end{multline*}
Choose $a<b$ such that $\nu((a,b])>0$. Since $\nu$ is
non-degenerate,
$$\iint_{(a,b]\times(a,b]}(y-x)^2 \nu(dx)\nu(dy)>0.$$
Therefore,
\begin{multline*}
2 \int_{(-\infty,b]}\frac{1}{(z_1-x)(z_2-x)}\nu(dx) - 2 G(z_1)G(z_2)\\
= \iint_{(-\infty, b]\times (-\infty, b] }
 \left(\frac{1}{z_1-x}-\frac{1}{z_1-y}\right)\left(\frac{1}{z_2-x}-\frac{1}{z_2-y}\right)\nu(dx)\nu(dy)
\\ =
\iint_{(-\infty, b]\times (-\infty, b] }\frac{(y-x)^2}{(z_1-x)(z_1-y)(z_2-x)(z_2-y)}\nu(dx)\nu(dy)\\
\geq \frac{1}{(z_2-a)^4} \iint_{(a,b]\times(a,b]}(y-x)^2
\nu(dx)\nu(dy)>0.
\end{multline*}
Since $z_2>z_1$, this together with \eqref{Delta_G} ends the proof.
\end{proof}

\subsection{Free convolution}

It is known, see \cite{Bercovici:1993}, that if $G$ is a Cauchy
transform of a probability measure, then
there exist $b>0$   such that $G$ is univalent in the domain
\begin{equation}\label{Gamma+}
\Gamma_{b}^+=\left\{z\in\CC^+:  \Re z>b, \Im z<  \Re z \right\}
\end{equation}
%$$
%\Gamma_{k,b}^+=\left\{z\in\CC^+:  \Re z>b, \Im z< k \Re z \right\}
%$$
Since $G(\bar z)=\overline{ G(z)}$,  $G$ is also univalent in
 $\Gamma_{b}^-=\overline{\Gamma_{b}^+}$.
For measures with support bounded from above by $b>0$, $G$ is also
one-to-one on $(b,\infty)$. So increasing $b$ if necessary, we can
extend the region of univalence to %the union of the three sets:
\begin{equation}\label{Gamma}
\Gamma_{b}=\left\{z\in\CC: \Re z>b, |\Im z|<  \Re z \right\}.
\end{equation}
Then the composition-inverse function
$K_\nu(z)=G^{\langle-1\rangle}_\nu(z)$  exists and is analytic for
$z$ in the domain $G(\Gamma_b)$. The $R$-transform is an analytic
function  in the same region, and is defined by
\begin{equation}\label{R def} \R_\nu(z)=K_\nu(z)-1/z\;. \end{equation}
(A warning is in place: some authors
use $R(z)=z\R(z)$ as the $R$-transform!)
As an analytic function, $\R_\nu$ is determined uniquely by its
values on the interval $\RR\cap G( \Gamma_b)=(0,G(b))$. In fact,  on the
real line $\R_\nu$ is defined on a potentially larger interval
$(0,G(B(\nu)))$.

Our interest in the
$R$-transform stems from its relation to free convolution:  a
free convolution $\mu\boxplus\nu$ of probability measures $\mu,\nu$
on Borel sets of
the real line is a uniquely defined  probability measure $\mu\boxplus\nu$
such that
$$
\R_{\mu\boxplus\nu}(z)=\R_\mu(z)+\R_\nu(z)
$$
for all $z$ in an appropriate domain (see \cite[Section
5]{Bercovici:1993} for details; the exact form of this domain is not
relevant for us, as we will be working only
with the intervals in $\RR$
and then appeal to the uniqueness of analytic extension.)

For $\alpha>0$ we denote by $\nu^{\boxplus \alpha}$ the free
convolution power of a probability measure $\nu$, which is defined
by
\begin{equation}\label{R-power}
\R_{\nu^{\boxplus \alpha}}(z)=\alpha \R_\nu(z).
\end{equation}
Convolution power of order  $\alpha\in[1,\infty)$ exists by
\cite[Section 2]{Belinschi:2005}. Convolution power of order
$\alpha>0$ exists for $\boxplus$-infinitely divisible laws. The
following result lists properties of $R$-transform that we need.

\begin{proposition}\label{P-R}
Suppose $\V$  is a pseudo-variance function of the \CSK family
$\mathcal{K}_+(\nu)$ generated by a probability measure $\nu$ with
%support  bounded from above and let
$b=\sup\mbox{supp}(\nu)<\infty$.
%, see \eqref{B(nu)}.
Then \begin{enumerate}
\item $\R_\nu$ is strictly increasing on $(0,G(b))$;
\item  For  $m\in(m_0,m_+)$,
 \begin{equation}\label{R formula}
\R_\nu\left(\frac{m}{\V(m)}\right)=m;
\end{equation}
\item  $\lim_{z\searrow 0}\R_\nu(z)=m_0\geq-\infty$;
\item
$\lim_{z\searrow 0}z\R_\nu(z)=0$. \end{enumerate}

(Of course,  the only new contribution of (iv) is  the case $m_0=-\infty$.)

\end{proposition}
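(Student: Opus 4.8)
The plan is to prove (ii) first, since it identifies $\R_\nu$ concretely and makes the remaining parts almost immediate. Writing $K_\nu=G^{\langle-1\rangle}_\nu$ and recalling $\R_\nu(z)=K_\nu(z)-1/z$, I would read off from \eqref{G2V} that $G(z(m))=m/\V(m)$, with $z(m)=m+\V(m)/m$ by \eqref{z2m}. Since $G$ is one-to-one on $(b,\infty)$, inverting gives $K_\nu\!\left(m/\V(m)\right)=z(m)=m+\V(m)/m$, whence
\[
\R_\nu\!\left(\frac{m}{\V(m)}\right)=\left(m+\frac{\V(m)}{m}\right)-\frac{\V(m)}{m}=m,
\]
which is \eqref{R formula}. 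The only thing to check is that $m/\V(m)$ lies in the domain of $K_\nu$, and this is exactly the bound $0<m/\V(m)<G(B(\nu))$ from Proposition \ref{P1.1}.

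For (i) I would invoke the difference-quotient inequality of Proposition \ref{P-G}, which is the real analytic input and which I expect to be the crux (although the heavy lifting is already done there). Fix $0<w_1<w_2<G(b)$ and set $z_i=K_\nu(w_i)$, so $G(z_i)=w_i$; as $G$ is strictly decreasing on $(b,\infty)$ (itself a consequence of \eqref{G-inequality}), we get $z_1>z_2>b$. Applying \eqref{G-inequality} to the pair $z_2<z_1$ and dividing by $G(z_1)G(z_2)>0$ yields
\[
\frac{1}{w_1}-\frac{1}{w_2}=\frac{G(z_2)-G(z_1)}{G(z_1)G(z_2)}>z_1-z_2,
\]
which rearranges to $z_2-1/w_2>z_1-1/w_1$, i.e. $\R_\nu(w_2)>\R_\nu(w_1)$. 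Hence $\R_\nu$ is strictly increasing on $(0,G(b))$.

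Parts (iii) and (iv) then follow by reparametrizing through $m$. The map $m\mapsto m/\V(m)=G(z(m))$ is a composition of the strictly decreasing $z(\cdot)$ of Proposition \ref{P.z} with the strictly decreasing $G$, hence continuous and strictly increasing; using Proposition \ref{P.z} together with $\lim_{z\to\infty}zG(z)=1$, it is a bijection of $(m_0,m_+)$ onto $(0,G(B))$ carrying $m\searrow m_0$ to the argument $\searrow 0$. Combined with (ii), this shows that on $(0,G(B))$ the transform $\R_\nu$ is precisely the inverse of $m\mapsto m/\V(m)$, so as $z\searrow 0$ one has $\R_\nu(z)=m\to m_0$, proving (iii) (the value $-\infty$ being permitted when $\nu$ lacks a first moment); and $z\R_\nu(z)=m^2/\V(m)\to 0$ by Corollary \ref{C.V/m}, proving (iv). The one point requiring care is that these are limits at the left endpoint $0$, where the intervals $(0,G(b))$ and $(0,G(B))$ agree, so the distinction between $b$ and $B(\nu)$ does not intervene.
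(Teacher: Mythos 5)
Your proposal is correct and follows essentially the same route as the paper: part (i) from the difference-quotient inequality of Proposition \ref{P-G} applied to $u_j=K_\nu(w_j)$, part (ii) by recognizing \eqref{R formula} as a restatement of \eqref{G2V} via $z(m)=1/\psi(m)$, and parts (iii)--(iv) by reparametrizing through $m$ and invoking monotonicity of $\R_\nu$ together with Corollary \ref{C.V/m}. The only difference is cosmetic ordering (you prove (ii) before (i)) and that you spell out the inversion step the paper compresses into ``this is the same as \eqref{G2V}.''
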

\begin{proof}
(i) Choose $0<x_1<x_2<G(b)$. Then $\R_\nu(x_j)=K_\nu(x_j)-1/x_j$ are well
defined. Let $u_j=K_\nu(x_j)=\R_\nu(x_j)+1/x_j$ so that $x_j=G_\nu(u_j)$.
Clearly, $u_1>u_2>b$. Then \eqref{G-inequality} says $1/G
(u_1)-1/G(u_2)>u_1-u_2$ so
 $1/x_1-1/x_2>K_\nu(x_1)-K_\nu(x_2)$, i.e. $\R_\nu(x_1)<\R_\nu(x_2)$.

(ii) This is  the same as  \eqref{G2V}.

(iii) Since the limit exists by part (i), this is a consequence of
\eqref{R formula}.

(iv) By \eqref{R formula} with $z=m/\V(m)$, we have
 $z\R_\nu(z)=m^2/\V(m)\to 0$ as $m\to m_0$ by Corollary \ref{C.V/m}. Since
 $\R_\nu$ is increasing on $(0,G(b))$, this ends the proof.
\end{proof}

\begin{corollary}\label{C-V}
$m\mapsto m/\V(m)$ is strictly increasing and smooth function on $(m_0,m_+)$.
\end{corollary}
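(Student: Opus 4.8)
The plan is to read everything off the identity \eqref{G2V} of Proposition \ref{P1.1}, which says that
$$\frac{m}{\V(m)}=G(z(m)),$$
where $z(m)=m+\V(m)/m$ is the map of \eqref{z2m}. This exhibits $m\mapsto m/\V(m)$ as the composition of the Cauchy transform $G$ with the auxiliary function $z(\cdot)$, so both the monotonicity and the smoothness will follow from the corresponding properties of the two factors.

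For strict monotonicity, recall from Proposition \ref{P.z} that $z$ is continuous and strictly decreasing on $(m_0,m_+)$, with $z(m)\nearrow\infty$ as $m\searrow m_0$ and $z(m)\searrow B$ as $m\nearrow m_+$; hence $z$ is a strictly decreasing bijection of $(m_0,m_+)$ onto $(B,\infty)$. On the other hand $G$ is strictly decreasing on $(B,\infty)\subseteq(b,\infty)$, by Proposition \ref{P-G} applied to any $b<z_1<z_2$ (or simply because $G'(z)=-\int(z-x)^{-2}\nu(dx)<0$ there). The composition of two strictly decreasing functions is strictly increasing, so $m\mapsto G(z(m))=m/\V(m)$ is strictly increasing on $(m_0,m_+)$.

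For smoothness I would first argue that $z$ is real-analytic. Writing $M(\theta)=\int(1-\theta x)^{-1}\nu(dx)=\frac1\theta G(1/\theta)$ and using that $G$ is holomorphic on $\CC\setminus(-\infty,b]$, while $\theta\mapsto 1/\theta$ carries $(0,\theta_+)=(0,1/B)$ into $(B,\infty)$, we see that $M$ is real-analytic on $(0,\theta_+)$; hence so is $m(\theta)$ by \eqref{L2m}. Since $m'(\theta)>0$ on $(0,\theta_+)$ was established in Section 2, the (analytic) inverse function theorem shows that $\psi$ is real-analytic and positive, so $z=1/\psi$ is real-analytic on $(m_0,m_+)$. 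As $G$ is real-analytic on $(B,\infty)$, the composition $m/\V(m)=G(z(m))$ is real-analytic, in particular $C^\infty$, on $(m_0,m_+)$.

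The only delicate point is this smoothness claim: the differentiation-under-the-integral argument of Section 2 yields only that $M$ is $C^1$. The clean way to upgrade this to analyticity (and hence $C^\infty$) is the observation $M(\theta)=\frac1\theta G(1/\theta)$ together with the holomorphy of $G$ off the support of $\nu$, which then transfers to $z$ through the inverse function $\psi$. If one prefers to avoid complex analysis, one can instead iterate the dominated-convergence estimate of Section 2 to bound all integrals $\int|x|^k(1-\theta x)^{-k-1}\nu(dx)$ locally uniformly in $\theta$, giving $M\in C^\infty$, and then conclude as above with the $C^\infty$ inverse function theorem.
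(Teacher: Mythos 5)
Your proof is correct, but it takes a genuinely different route from the paper's. The paper's proof is a one-line inversion of the $R$-transform identity \eqref{R formula}: since $\R_\nu(m/\V(m))=m$, one writes $m/\V(m)=\R_\nu^{\langle -1\rangle}(m)$ and appeals to the fact that $\R_\nu$ is smooth and strictly increasing on $(0,G(b))$ (Proposition \ref{P-R}(i), which rests on the inequality of Proposition \ref{P-G}). You instead work from the equivalent identity \eqref{G2V}, $m/\V(m)=G(z(m))$, and get strict monotonicity as the composition of the two strictly decreasing maps $z(\cdot)$ (Proposition \ref{P.z}) and $G$. Since \eqref{R formula} and \eqref{G2V} encode the same relation (namely $K_\nu(m/\V(m))=z(m)$), both arguments ultimately trace back to the strict monotonicity of $G$ on $(b,\infty)$; the difference is only in which function gets inverted. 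On the smoothness half your argument is, if anything, more complete than the paper's: the one-line proof implicitly needs the inverse of $\R_\nu$ to be smooth, which requires a nonvanishing derivative and not merely strict monotonicity, whereas you correctly note that Section 2 only delivers $C^1$ regularity of $M$, upgrade it to real-analyticity via $M(\theta)=G(1/\theta)/\theta$ and the holomorphy of $G$ off the support, and then apply the analytic inverse function theorem using the established positivity of $m'(\theta)$. Both routes are valid; the paper's is shorter given Proposition \ref{P-R}, while yours is more self-contained and makes the regularity claim airtight.
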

\begin{proof} We rewrite \eqref{R formula} as $m/\V(m)=\R_\nu^{\langle
 -1\rangle}(m)$, and use the fact that $\R$ is smooth and strictly increasing.
\end{proof}
It is worth mentioning here that one can use the $R$-transform to
determine the domain of means and the pseudo-variance function of a
\CSK family, and even to define them. In fact
$(m_0,m_+)=\R_\nu((0,G(B(\nu))))$, and $\V$ is nothing but the
function which gives $\R_\nu(z)/z$ as a function of $\R_\nu(z)$. For
example, if $\nu$ is the inverse semicircle law with
$\R_\nu(z)=-p/\sqrt{z}$, we have that
$\R_\nu(z)/z=\frac{(\R_\nu(z))^{3}}{p^2}$ so that the
pseudo-variance function of the generated \CSK family is equal to
$\frac{m^{3}}{p^2}$. This is the analogue of the fact that, for the
classical exponential families, the variance function is the
function which gives the second derivative of the cumulant function
in terms of the first derivative.

 \begin{proposition}\label{P-V-alpha}
 Let $\V_\nu$ be the pseudo-variance function of the one sided \CSK
 family generated by a probability measure $\nu$ with support bounded
 from above and with the mean $-\infty\leq m_0< \infty$.   Then for $\alpha>0$
 such that  $\nu^{\boxplus \alpha}$ is
defined, the support of  $\nu^{\boxplus \alpha}$ is bounded from
 above and for $m>\alpha m_0$ close enough to $\alpha m_0$,
  \begin{equation}\label{V-power}
 \V_{\nu^{\boxplus \alpha}}(m)=\alpha \V_\nu(m/\alpha).
\end{equation}
\end{proposition}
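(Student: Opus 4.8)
The plan is to run the whole argument through the $R$-transform, using the reformulation of Proposition \ref{P-R}(ii) recorded after Corollary \ref{C-V}: for any probability measure $\mu$ with support bounded above and pseudo-variance $\V_\mu$,
\begin{equation*}
\V_\mu(\R_\mu(z))=\frac{\R_\mu(z)}{z},\qquad z\in(0,G_\mu(B(\mu))).
\end{equation*}
Indeed, putting $z=m/\V_\mu(m)$ in the identity $\R_\mu(m/\V_\mu(m))=m$ gives $\R_\mu(z)=m$ and $\V_\mu(m)=m/z=\R_\mu(z)/z$. So $\V_\mu$ is precisely the function carrying $\R_\mu(z)$ to $\R_\mu(z)/z$, and the scaling \eqref{R-power} of the $R$-transform should pass directly to $\V$. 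The only real work is to show that $\nu^{\boxplus\alpha}$ has support bounded above, so that $\V_{\nu^{\boxplus\alpha}}$ exists at all.

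First I would settle the support. Since $\nu$ is bounded above, $\R_\nu$ is real-analytic on the interval $(0,G(b))$, hence $H(z):=\alpha\R_\nu(z)+1/z$ is real-analytic there. By \eqref{R def} and the defining property \eqref{R-power}, the composition inverse $K_{\nu^{\boxplus\alpha}}=\R_{\nu^{\boxplus\alpha}}+1/z$ equals $H$ on a common region of type $G(\Gamma_b)$ from \eqref{Gamma} where both are analytic; by uniqueness of analytic continuation $K_{\nu^{\boxplus\alpha}}=H$ on a real interval $(0,\delta)$ and is real-valued there. Proposition \ref{P-R}(iv) gives $z\R_\nu(z)\to0$, so $\R_\nu(z)=o(1/z)$ and $H(z)=\tfrac1z(1+o(1))\to+\infty$ as $z\searrow0$; by the intermediate value theorem $H$ covers a ray $(b',\infty)$ for some large $b'$, and near $0$ (where $H'\approx-1/z^2$) it is locally invertible. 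The resulting real branch of $K_{\nu^{\boxplus\alpha}}^{-1}$ is the analytic continuation of $G_{\nu^{\boxplus\alpha}}$ across $(b',\infty)$, taking real values, so by Stieltjes inversion \eqref{SIF} the measure $\nu^{\boxplus\alpha}$ puts no mass on $(b',\infty)$; that is, $\mbox{supp}(\nu^{\boxplus\alpha})\subset(-\infty,b']$.

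Once the support is known, the formula is a one-line substitution. For $z$ near $0$, \eqref{R-power} gives $\R_{\nu^{\boxplus\alpha}}(z)=\alpha\R_\nu(z)$, so setting $m=\R_{\nu^{\boxplus\alpha}}(z)=\alpha\R_\nu(z)$, whence $\R_\nu(z)=m/\alpha$, and applying the displayed identity first to $\mu=\nu^{\boxplus\alpha}$ and then to $\mu=\nu$,
\begin{equation*}
\V_{\nu^{\boxplus\alpha}}(m)=\frac{\R_{\nu^{\boxplus\alpha}}(z)}{z}=\alpha\,\frac{\R_\nu(z)}{z}=\alpha\,\V_\nu(\R_\nu(z))=\alpha\,\V_\nu(m/\alpha).
\end{equation*}
As $z\searrow0$ we have $m=\alpha\R_\nu(z)\searrow\alpha m_0$ by Proposition \ref{P-R}(iii); thus the identity holds exactly for $m>\alpha m_0$ close enough to $\alpha m_0$, which is the claimed range, and the local nature of \eqref{V-power} simply reflects that the interval $(0,G(B(\nu)))$ and its analogue for $\nu^{\boxplus\alpha}$ agree only near the shared endpoint $z=0$.

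The main obstacle is the support step, not the algebra. The identity itself is immediate once both pseudo-variance functions are defined, but showing $\nu^{\boxplus\alpha}$ remains bounded above for every admissible $\alpha$ -- including $0<\alpha<1$, where existence rests on $\boxplus$-infinite divisibility -- requires the passage from the complex cone \eqref{Gamma} to the real axis and, when $\nu$ has no first moment ($m_0=-\infty$, $\R_\nu(z)\to-\infty$), the quantitative control $z\R_\nu(z)\to0$ of Proposition \ref{P-R}(iv) to force $H(z)\to+\infty$ and to keep $H$ locally invertible near the origin.
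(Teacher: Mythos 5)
Your argument is correct and follows essentially the same route as the paper: first show $\nu^{\boxplus\alpha}$ has support bounded above by analytically continuing $G_{\nu^{\boxplus\alpha}}=K_{\nu^{\boxplus\alpha}}^{\langle-1\rangle}$ across a real ray (the paper simply cites \cite[Proposition 6.1]{Bercovici:1993} for the conclusion you re-derive), then combine \eqref{R formula} with \eqref{R-power} and the injectivity of $\R_\nu$ near $0$ to transfer the scaling to $\V$. Your substitution $\V_\mu(\R_\mu(z))=\R_\mu(z)/z$ is just a reparametrization of the paper's computation, so the two proofs coincide in substance.
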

\begin{proof}
We first show that  the support of
$\nu^{\boxplus \alpha}$  is bounded from above. For functions with
 support bounded from above, $\R_\nu$ is univalent in a domain that
contains some open interval $(0,\delta)$. Therefore $\R_{\nu^{\boxplus \alpha}}$ is
 univalent in the same domain. This shows that $G_{\nu^{\boxplus
 \alpha}}$ is analytic
on a domain that contains $(c,\infty)$, where $c=K_{\nu^{\boxplus
 \alpha}}(\delta)$.
So the support of $\nu^{\boxplus \alpha}$ is bounded from above by
 $c$, see \cite[Proposition 6.1]{Bercovici:1993}.

From Proposition \ref{P-R}(iii) we see that the domain of
means for  one sided  \CSK generated by $\nu^{\boxplus \alpha}$
starts at  $\lim_{z\searrow 0} \R_{\nu^{\boxplus \alpha}}(z)=\alpha
m_0$.
 So for $m>\alpha m_0$ close enough to $\alpha m_0$ so that
 $m/\alpha \in (m_0,m_+)$ and $m/{\V_{\nu^{\boxplus
\alpha}}(m)}\in(0, G(B(\nu)))$ (recall Corollary \ref{C.V/m}) we can
 apply \eqref{R formula} and \eqref{R-power} to
see that
$$
\R_\nu\left(\frac{m}{\V_{\nu^{\boxplus \alpha}}(m)}\right)=
\frac{1}{\alpha}\R_{\nu^{\boxplus
\alpha}}\left(\frac{m}{\V_{\nu^{\boxplus
\alpha}}(m)}\right)=m/\alpha=\R_\nu\left(\frac{m/\alpha}{
\V_\nu(m/\alpha)}\right).
$$
From Proposition \ref{P-R}(i) we know that $\R_\nu$ is one-to-one on $(0,G(B(\nu)))$, so
$$
\frac{m}{\V_{\nu^{\boxplus \alpha}}(m)}=\frac{m/\alpha}{
\V_\nu(m/\alpha)},
$$
and formula \eqref{V-power} follows.
\end{proof}
%% added
We remark that the restriction of \eqref{V-power} to $m$ "close enough" to $\alpha m_0$ cannot be easily avoided, as we do not have a general formula for the upper end of the domain of means for $\nu^{\alpha \boxplus}$. (For the freely
$r$-stable laws the upper end of the domain of means is $\alpha^ r m_+$, so we do not expect a simple general formula.)

\subsection{Affine transformations}\label{Sect:Affine}
 Here we collect the formulas that describe the effects of applying
 an affine transformation to the generating measure.

For $\delta\ne 0$ and $\gamma\in\RR$, let $\varphi(\nu)$ be the
image of $\nu$ under the affine map $x\mapsto \frac{x-\gamma}{\delta}$. In
other words,  if $X$ is a random variable with  law $\nu$ then $\varphi(\nu)$
is the law of $(X-\gamma)/\delta$, or $\varphi(\nu)=D_{1/\delta}(\nu\star \delta_{-\gamma})$,
where $D_r(\mu)$ denotes the dilation of measure $\mu$ by a number $r\ne0$, i.e.
%$D_r(\mu)=\varphi(\mu)$ with $\varphi(x)=rx$.
$D_r(\mu)(U)=\mu(U/r)$.

It is well known that  $G_{\varphi(\nu)}(z)=\delta G_\nu(\delta z+\gamma)$
and $\R_{\varphi(\nu)}(z)=1/\delta \R_\nu(z/\delta)-\gamma/\delta$.
 The effects of the affine transformation on the corresponding \CSK
 family are as follows:
\begin{itemize}
\item Point $m_0$ is transformed to
$(m_0-\gamma)/\delta$. In particular, if
$\delta<0$ then $\varphi(\nu)$ has support bounded from below and
then it generates the left-sided  $\mathcal{K}_-(\varphi(\nu))$.
\item
For $m$ close enough to $(m_0-\gamma)/\delta$ the pseudo-variance function is
\begin{equation}\label{V_phi}
\V_{\varphi(\nu)}(m)=\frac{m}{\delta(m\delta+\gamma)}\V_\nu(\delta m+\gamma).
\end{equation}
 In particular, if the variance function exists, then
$V_{\varphi(\nu)}(m)=\frac{1}{\delta^2}V_\nu(\delta m+\gamma)$.
\end{itemize}
A special case worth noting is the reflection $\varphi(x)= -x$. If
 $\nu$ has support bounded from above and its right-sided \CSK family
$\mathcal{K}_+(\nu)$ has domain of means $(m_0, m_+)$ and pseudo-variance function $\V_\nu(m)$, then
$\varphi(\nu)$ generates the left-sided \CSK family $\mathcal{K}_-(\varphi(\nu))$ with the domain of means
$(-m_+,-m_0)$ and the pseudo-variance function $\V_{\varphi(\nu)}(m)=\V_\nu(-m)$.

 \subsection{Reproductive property}

\begin{proposition}\label{C.4.3} If  $\V$   is a pseudo-variance function
of a \CSK family generated by a
probability measure $\nu$ with  support bounded from above, then for
 $\la\geq 1$  measure
$$\nu_{\la}:=D_{1/\la}(\nu^{\boxplus \la})$$ has also support bounded
 from above and there is $\delta>0$ such that the pseudo-variance
 function of the one sided \CSK family
generated by $\nu_\la$
is $\V(m)/\la$ for $m\in(m_0,m_0+\delta)$.

If $\nu$ is free-infinitely divisible, then the above holds for every
 $\la>0$. Conversely, if for every $\la>0$, there is
 $\delta=\delta(\la)>0$ such that $\V(m)/\la$ is a pseudo-variance function
 of some \CSK family on $(m_0,m_0+\delta)$, then
$\nu$ is free-infinitely divisible.
\end{proposition}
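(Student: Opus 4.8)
The plan is to handle the two implications separately, reusing Propositions~\ref{P-V-alpha} and~\ref{P-R} together with the affine‑transformation formulas. For the forward statement, I would start from the free convolution power $\nu^{\boxplus\la}$, which exists for $\la\geq 1$ by \cite{Belinschi:2005} (and for every $\la>0$ once $\nu$ is $\boxplus$-infinitely divisible). By Proposition~\ref{P-V-alpha} its support is bounded from above and $\V_{\nu^{\boxplus\la}}(m)=\la\,\V(m/\la)$ for $m>\la m_0$ close to $\la m_0$. Since $\nu_\la=D_{1/\la}(\nu^{\boxplus\la})$ is a dilation by the positive factor $1/\la$, its support remains bounded from above, and by the affine bullet in Section~\ref{Sect:Affine} its starting mean is $(\la m_0)/\la=m_0$. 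Applying the affine pseudo-variance formula \eqref{V_phi} with $\gamma=0$, $\delta=\la$ to the generating measure $\nu^{\boxplus\la}$ yields
\[
\V_{\nu_\la}(m)=\frac1{\la^2}\,\V_{\nu^{\boxplus\la}}(\la m)=\frac1{\la^2}\cdot\la\,\V(m)=\frac{\V(m)}{\la}
\]
for $m\in(m_0,m_0+\delta)$; the restriction to a right neighborhood of $m_0$ is inherited directly from the corresponding restriction in Proposition~\ref{P-V-alpha}. The case of $\boxplus$-infinitely divisible $\nu$ is identical, the only change being that $\nu^{\boxplus\la}$ is now available for all $\la>0$.

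For the converse, suppose that for each $\la>0$ the function $\V/\la$ is, on some interval $(m_0,m_0+\delta)$, the pseudo-variance function of a \CSK family generated by a probability measure $\mu_\la$. Applying Proposition~\ref{P-R}(ii) to both $\mu_\la$ and $\nu$ and using $\V_{\mu_\la}=\V/\la$, I would put $u=m/\V(m)$, so that $\R_\nu(u)=m$ while $m/\V_{\mu_\la}(m)=\la u$, giving
\[
\R_{\mu_\la}(\la u)=m=\R_\nu(u)
\]
for $u$ ranging over a right neighborhood $(0,\eps)$ of $0$ (recall $m/\V(m)\searrow 0$ as $m\searrow m_0$ by Corollary~\ref{C.V/m}). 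Thus $\R_{\mu_\la}(w)=\R_\nu(w/\la)$ on a small interval near $0$.

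The decisive step is to recognize $\la\R_\nu$ as a genuine $R$-transform. Using the dilation rule $\R_{D_r(\rho)}(z)=r\,\R_\rho(rz)$ from Section~\ref{Sect:Affine} with $r=\la$ and $\rho=\mu_\la$, the probability measure $D_\la(\mu_\la)$ satisfies
\[
\R_{D_\la(\mu_\la)}(z)=\la\,\R_{\mu_\la}(\la z)=\la\,\R_\nu(z)
\]
on a right neighborhood of $0$. Since an $R$-transform is determined by its values on such an interval (the associated Cauchy transforms then agree by analytic continuation, and Proposition~\ref{P1.1} together with Stieltjes inversion identifies the measures), $D_\la(\mu_\la)$ must be exactly $\nu^{\boxplus\la}$, so $\nu^{\boxplus\la}$ exists for every $\la>0$. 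Taking $\la=1/n$ produces probability measures $\mu$ with $\mu^{\boxplus n}=\nu$ for all $n$, i.e.\ $\nu$ is $\boxplus$-infinitely divisible. I expect this final passage to be the main obstacle: it is where the purely local hypothesis ``$\V/\la$ is realizable as a pseudo-variance function on $(m_0,m_0+\delta)$'' must be upgraded, through the dilation identity and analytic continuation, into the global statement ``$\la\R_\nu$ is realizable as an $R$-transform for every $\la>0$,'' which is precisely the $R$-transform characterization of free infinite divisibility.
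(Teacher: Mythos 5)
Your proof is correct and follows exactly the route the paper indicates (the paper's own ``proof'' merely cites Proposition~\ref{P-V-alpha} and the analogous result in \cite{Bryc-Ismail-05} and omits the details): Proposition~\ref{P-V-alpha} combined with the dilation formula \eqref{V_phi} for the direct part, and the identification of $\la\R_\nu$ as an $R$-transform via \eqref{R formula} and analytic continuation for the converse. In effect your writeup supplies precisely the details the paper leaves out.
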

\begin{proof} This is closely related to Proposition \ref{P-V-alpha}
 and is similar to \cite[Proposition 4.3]{Bryc-Ismail-05}, see also
 \cite{Bryc-06-08}; the details are omitted.
\end{proof}

\section{Quadratic and cubic pseudo-variance functions}
In this section we review the description of \CSK families with
 quadratic variance functions, adding the precise domain of means,
 then we analyze  certain cubic variance functions and point out the
 reciprocity relation between these two cases.
  \subsection{\CSK families with quadratic variance functions} The
 generating measures of \CSK families with quadratic variance
 functions  $V(m)=a -b  m + c   m^2$ with $a>0$, i.e. with
the pseudo-variance functions of the form
  \begin{equation}\label{PV_Meixner}
  \V(m)=\frac{m(a - b  m + c   m^2)}{m-m_0}
\end{equation}
were determined in \cite{Bryc-Ismail-05}. (Up to affine
 transformations, it is enough to consider $m_0=0$ and $a=1$.)  These
 are the so called
  free Meixner laws (\cite{Anshelevich01,Saitoh-Yoshida01}). Since  free Meixner laws are compactly supported, they  generate two-sided \CSK families.
   Remark \ref{R.PDM} can be used to determine the precise domain of
 means which was not previously available, except for an ad-hoc
 technique for the semi-circle law in  \cite[Example 4.1]{Bryc-06-08}.

    Unsurprisingly, the domain of means ends at the rightmost atom of
 $\nu$ when there is one; but may fall strictly inside the support of $\nu$
 when there are no atoms in $(0,\infty)$. When $m_0=0$, a calculation gives     \begin{equation}\label{DM-meixner}
   m_+=\begin{cases}
   \frac{b-\sqrt{b^2-4ac}}{2c} & \mbox{ if either  $c>0$,
 $b>2\sqrt{ac}$ or  $-1\leq c<0$ }; \\
    a/b & \mbox{ if $c=0$ and $b>\sqrt{a}$} ;\\
       \sqrt{a/(1+c)}&  \mbox{when $\nu$ has no atoms in $(0,\infty)$}.\\
\end{cases}
\end{equation}
(We recommend  \cite{Saitoh-Yoshida01} for the treatment of atoms.)

\subsection{A class of families with cubic pseudo-variance
function}

 Next, we
 describe the class of Cauchy-Stieltjes kernel
families with pseudo-variance functions of the form
\begin{equation}\label{cubic}
 \V(m)=m (a m^2+ b
m + c),
\end{equation}
with $a>0$. This class is important because it is related to the
quadratic class by a relation of reciprocity which will be introduced
 in the next section. \\
Suppose that \eqref{cubic} is the
pseudo-variance function generated by a distribution $\nu$.   Then \eqref{z2m} is a quadratic equation for $m$, so we can use \eqref{G2V} to express $G$ as a function of real $z$ large enough.
By uniqueness of the
analytic extension we get
$$G_{\nu}(z)=\frac{b+1+2 a z-\sqrt{(b+1)^2+4 a (z-c)}}{2 (c+b z+a z^2)}$$
for all $z$ in the upper half plane $\CC^+$.
The Stieltjes inversion formula \eqref{SIF} gives
   \begin{multline}\label{gm_cubic}
   \nu(dx)=\frac{\sqrt{ 4 a c- (b+1)^2- 4 a x}}{2\pi (c+b x+ a x^2)}1_{(-\infty,c-(b+1)^2/(4a))}(x)\, dx
   \\+p(a,b,c)\delta_{-(b+\sqrt{b^2-4ac} )/(2a)},
\end{multline}
where the weight of the atom
$p(a,b,c)= 1-1/\sqrt{b^2-4ac}$ if  $b^2>4ac+1$, and is $0$ otherwise.
 (In particular for $c=0$, there is an atom at $-b/a$ if $b>1$, or an
 atom at $0$ if $b<-1$.)

From Proposition \ref{P-DM}, we see that the domain of means is
 $(-\infty, m_+)$ with
 $m_+=B(\nu)-1/G(B(\nu))$.   A calculation that goes over the cases
 when the support contains positive numbers shows that
 \begin{equation}\label{DM-cubic}
 m_+=\begin{cases}
 -\frac{1+b}{2a} &\mbox{ if   $c>0$, $-\sqrt{1+4ac}\leq b\leq 2\sqrt{ac}-1$};\\
 \\-\frac{(b+\sqrt{b^2-4ac} )}{2a} & \mbox{ if  $c> 0$ and  $b\leq
 -\sqrt{1+4ac}$};\\
 -\frac{b+1+\sqrt{(b+1)^2-4 a c}}{2 a} & \mbox{ if either $c\leq 0$,
 or $b>2\sqrt{ac}-1$ }.
 \end{cases}
\end{equation}

  The most interesting example in this class is the inverse
semicircle law with the pseudo-variance function $\V(m)=m^3/p^2$ which
corresponds to the case $a=1/p^2$, $b=0$ and $c=0$. We have that
$$
G_\nu(z)=\frac{p^2+2 z-p \sqrt{4 z+p^2}}{2z^2},
$$
and the density of $\nu$ is
\begin{equation}\label{half-stable}
f(x)=\frac{p\sqrt{-p^2-4x}}{2\pi
x^2}
\end{equation}
on $(-\infty,-p^2/4)$. This is a free 1/2-stable density, see
\cite[page 1054]{Bercovici:1999}, see also \cite{Perez-Abreu:2008}.

The domain of means is $(m_0,m_+)=(-\infty, -p^2)$; this can be read
 out either from
 $\psi(m)=\frac{p^2}{m(m+p^2)}$, or  from Proposition \ref{P-DM}
 where the last case of
 \eqref{DM-cubic} is relevant here.\\
Similarly, one can use (4.3), (4.4) and (4.5) to get the free
analogous of the five other members of the Letac-Mora class with
variance function of degree 3. Keeping the names given in [11], we
have
\begin{enumerate}
\item Free Abel (or Free Borel-Tanner)
$$ \nu(dx)=\frac{1}{\pi( 1-x ) \sqrt{-x}}\,1_{(-\infty,0)}(x)\, dx,$$
with pseudo-variance function $\V(m)=m^{2}(m-1)$ and  domain of the
means $(-\infty,0)$.\\

\item  Free Ressel (or Free Kendall)
$$ \nu(dx)=\frac{-1}{\pi x \sqrt{-1-  x}}\,1_{(-\infty,-1)}(x)\, dx,$$
with pseudo-variance function $\V(m)=m^{2}(m+1)$ and domain of the means $(-\infty,-2)$.\\

\item Free strict arcsine
 $$ \nu(dx)=\frac{\sqrt{ 3 -4x}}{2\pi (1+x^{2})}\,1_{(-\infty,3/4)}(x)\, dx,$$
 with pseudo-variance function $\V(m)=m(1+m^{2})$ and  domain of the means $(-\infty,-1/2)$.\\

\item Free large arcsine
 $$ \nu(dx)=\frac{r\sqrt{4-5r^{2} -4(1+r^{2})x}}{2\pi (x^2+r^{2}(1+x)^{2})}\,1_{S}(x)\, dx,$$
where $r>0$ and  $S=\left(-\infty,\frac{4-5r^{2}}{4(1+r^{2})}\right)$. The pseudo-variance function is
$$\V(m)=m\left(1+2m+\frac{1+r^{2}}{r^{2}}m^{2}\right),$$
 and domain of the means is $\left(-\infty,-\frac{3r^{2}}{2(1+r^{2})}\right)$, if
 $r^{2}\leq4/5$, and $\left(-\infty,-\frac{3r^{2}+r\sqrt{5r^{2}-4}}{2(1+r^{2})}\right)$, if
 $r^{2}>4/5$.\\

\item Free Tak\'{a}cs 
 $$ \nu(dx)=\frac{\sqrt{-5 r^2-2 r-1 -4r(1+r)x}}{2\pi r( 1+x)(1+(1+1/r) x)}\,1_{S}(x)\, dx+ (1-r)^+\delta_{-1},$$
where $r>0$ and $S=\left(-\infty,1-\frac{(1+3r)^{2}}{4r(1+r)}\right)$. The pseudo-variance function is %W corrected (1+ 2m)
$$\V(m)=m(1+m)\left(1+\frac{1+r}{r}m\right),$$
 and domain of the means is  $\left(-\infty,-\frac{1+3r+\sqrt{5r^{2}+2r+1}}{2(1+r)}\right)$.
\end{enumerate}
\subsection{Reciprocity}
The notion of reciprocity between two natural exponential families
is defined by a symmetric relation between the cumulant functions of
two generating measures (see
\cite[Section 5]{Let:Mor}). Similarly, we can define the
reciprocity between two Cauchy Stieltjes Kernel Families by a
relation between the $R$-transforms of the generating distributions.
\begin{definition} Suppose $\widetilde\nu, \nu$ are probability
 measures   with support bounded from above.
We say that the corresponding one-sided Cauchy-Stieltjes kernel
families $\mathcal{K}_+(\widetilde\nu)$ and $\mathcal{K}_+(\nu)$ are
reciprocal if $m_0:=\int x\nu(dx)$ and $\widetilde m_0:=\int
x\widetilde \nu(dx) $ are of opposite signs and there is $\delta>0$ such that
\begin{equation}\label{(5.1)}
\R_{\widetilde\nu}(z \left|\R_{\nu}(z)\right|)=-\frac{1}{\R_{\nu}(z)}
\end{equation}
for all $z$ in  $(0,\delta)$.
\\ In this case, we also say that the distributions
$\widetilde\nu$ and $\nu$
are reciprocal.
\end{definition}
We note that $\R_{\nu} $ is defined for $z>0$ small enough so by
Proposition \ref{P-R}(iv) both sides of the expression \eqref{(5.1)} are well
defined for  all $z\in(0,\delta)$ when $\delta>0$ is small enough.
We also remark that with $m_0:=\int x\nu(dx)\in[-\infty,\infty)$, in
\eqref{(5.1)}  we actually have
$$\left|\R_\nu(z)\right|=\begin{cases}
\R_\nu(z) & \mbox{ if $m_0\geq 0$} ;\\
-\R_\nu(z) & \mbox{ if $m_0<0$}
\end{cases}
$$
for  $z>0$ close enough to $0$.

Note that \eqref{(5.1)} is equivalent to
\begin{equation}\label{(5.2)}
\R_{\nu}(z'
\left|\R_{\widetilde\nu}(z')\right|)=-\frac{1}{\R_{\widetilde\nu}(z')}
\end{equation}
for all $z'>0$ small enough, so reciprocity is a symmetric relation.
Indeed, we first note that
\eqref{(5.1)} implies $m_0=-1/\widetilde m_0$ even if $m_0=0$ or $\widetilde m_0=0$. We consider
separately the cases $m_0\geq 0$ and $m_0<0$.

If $m_0\geq 0$, we set $z'=z \R_\nu(z)$. From Proposition \ref{P-R}(iv)  we see that $\R_\nu(z')$ is well defined for small enough $z>0$.  Then $\eqref{(5.1)}$ is
equivalent to
\begin{equation}
\R_{\widetilde\nu}(z')=-\frac{1}{\R_{\nu}(z)}\   \textrm{and} \ \
z=-z'\R_{\widetilde\nu}(z').
\end{equation}
Hence
\begin{equation}
\R_{\nu}(-z'\R_{\widetilde\nu}(z'))=-\frac{1}{\R_{\widetilde\nu}(z')}.
\end{equation}
Since $\R_{\widetilde\nu}(z')<0$, because $\widetilde m_0<0$, this
is nothing but $\eqref{(5.2)}$.

If $m_0<0$, we use the same reasoning using $z'=-z\R_\nu(z)$.

The reciprocity between $\mathcal{K}_+(\widetilde\nu)$ and
 $\mathcal{K}_+(\nu)$
may also be expressed using the variance functions. More precisely,
we have:
\begin{theorem}
 Let $\V_{\widetilde\nu}$ and $\V_\nu$ be the pseudo-variance
 functions of the right-sided
Cauchy-Stieltjes kernel families generated by $\widetilde\nu$ and $\nu$,
with  means $\widetilde m_0$ and $m_0$, respectively. Then
$\mathcal{K}_+(\widetilde\nu)$ and $\mathcal{K}_+(\nu)$ are
 reciprocal if and only
if
$m_0= -1/\widetilde m_0$ (it is understood that $-1/0=-\infty$), and
 \begin{equation}\label{(5.3)}
 \V_{\widetilde\nu}(m)=-|m|^{3}\V_\nu(-\frac{1}{m}).
\end{equation}
for all $m>\widetilde m_0$ close enough to $\widetilde m_0$.
\end{theorem}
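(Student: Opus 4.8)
The plan is to reduce the entire equivalence to the single identity \eqref{R formula}, $\R_\nu(m/\V_\nu(m))=m$, together with the strict monotonicity of $\R_\nu$ from Proposition \ref{P-R}(i). As recorded in Corollary \ref{C-V}, \eqref{R formula} says exactly that $m\mapsto m/\V_\nu(m)$ is the compositional inverse of $\R_\nu$, so it is the dictionary that converts the $R$-transform relation \eqref{(5.1)} into the pseudo-variance relation \eqref{(5.3)} and back. Throughout I parametrize the $\nu$-side by a mean value $\mu$ near $m_0$, putting $z=\mu/\V_\nu(\mu)$ so that $\R_\nu(z)=\mu$ and, by Corollary \ref{C.V/m}, $z\searrow 0$ as $\mu\searrow m_0$; I parametrize the $\widetilde\nu$-side by $m$ near $\widetilde m_0$, putting $\widetilde z=m/\V_{\widetilde\nu}(m)$ so that $\R_{\widetilde\nu}(\widetilde z)=m$. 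The opposite-sign hypothesis on the means is needed only to resolve absolute values: when $m_0<0$ one has $|\mu|=-\mu$ and $m=-1/\mu>0$, when $m_0>0$ one has $|\mu|=\mu$ and $m<0$, and in either case $\mu\mapsto m=-1/\mu$ is an order-respecting bijection carrying a right-neighborhood of $m_0$ onto a right-neighborhood of $\widetilde m_0=-1/m_0$.

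For the implication from reciprocity to \eqref{(5.3)} I may take the mean relation $m_0=-1/\widetilde m_0$ as already established, since the discussion preceding the theorem extracts it from \eqref{(5.1)}. Fix $z\in(0,\delta)$ and write $\mu=\R_\nu(z)$, $m=-1/\mu$. Then \eqref{(5.1)} reads $\R_{\widetilde\nu}(z|\mu|)=-1/\mu=m$, while \eqref{R formula} applied to $\widetilde\nu$ gives $\R_{\widetilde\nu}(\widetilde z)=m$ with $\widetilde z=m/\V_{\widetilde\nu}(m)$. Both $z|\mu|$ and $\widetilde z$ lie in $(0,G(B(\widetilde\nu)))$, the latter by Proposition \ref{P1.1}, so the injectivity of $\R_{\widetilde\nu}$ from Proposition \ref{P-R}(i) forces $z|\mu|=m/\V_{\widetilde\nu}(m)$. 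Substituting $z=\mu/\V_\nu(\mu)$ and $\mu=-1/m$ and using $m^2=|m|^2$ turns this equality into $\V_{\widetilde\nu}(m)=-|m|^3\V_\nu(-1/m)$, which is \eqref{(5.3)}; as $z$ sweeps $(0,\delta)$ this holds for all $m>\widetilde m_0$ close enough to $\widetilde m_0$.

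The converse runs the same chain backwards. Assuming $m_0=-1/\widetilde m_0$ and \eqref{(5.3)}, fix a small $z>0$, set $\mu=\R_\nu(z)$ and $m=-1/\mu$, and check by the identical algebra that $m/\V_{\widetilde\nu}(m)=z|\mu|$. Then \eqref{R formula} for $\widetilde\nu$ gives $\R_{\widetilde\nu}(z|\mu|)=\R_{\widetilde\nu}(m/\V_{\widetilde\nu}(m))=m=-1/\R_\nu(z)$, and since $z|\mu|=z|\R_\nu(z)|$ this is precisely \eqref{(5.1)}, so $\mathcal{K}_+(\widetilde\nu)$ and $\mathcal{K}_+(\nu)$ are reciprocal.

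The step needing genuine care, as opposed to routine algebra, is the bookkeeping of domains and signs: one must confirm that $z=\mu/\V_\nu(\mu)$ really maps $(0,\delta)$ onto a right-neighborhood of $m_0$ (this is where Corollary \ref{C.V/m} and the monotonicity of $m/\V(m)$ from Corollary \ref{C-V} are used), that the point $z|\mu|$ at which \eqref{(5.1)} is evaluated stays inside $(0,G(B(\widetilde\nu)))$ so that injectivity may be invoked, and that the $z$- and $m$-neighborhoods correspond under $\mu\mapsto -1/\mu$. The truly degenerate cases $m_0=0$ (hence $\widetilde m_0=-\infty$) and $m_0=-\infty$ (hence $\widetilde m_0=0$) must be treated on their own, since there $\mu\mapsto -1/\mu$ collapses; the extended-value conventions for the means together with the limiting statements in Proposition \ref{P-R}(iii)--(iv) are what keep the argument valid in those cases.
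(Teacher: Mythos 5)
Your proposal is correct and follows essentially the same route as the paper: both directions are reduced to the identity \eqref{R formula}, which exhibits $m\mapsto m/\V(m)$ as the compositional inverse of $\R$, combined with the strict monotonicity of $\R$ from Proposition \ref{P-R}(i) and the change of variables $m\mapsto -1/m$; your choice to parametrize from the $\nu$-side rather than the $\widetilde\nu$-side, and to invoke injectivity of $\R_{\widetilde\nu}$ where the paper writes $mm'=-1$, are only cosmetic differences.
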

\begin{proof}
Suppose $m_0=-1/\widetilde m_0$ and \eqref{(5.3)} holds for all
 $\widetilde m_0<m<M$. Decreasing $M$ if necessary, we may ensure
 that $1/m\in (m_0,m_+)$. Choose $z'>0$ such that
 $z'<M/\V_{\widetilde \nu}(M)$.Since $m\mapsto m/\V_{\widetilde
\nu}(m)$ is a continuous function, we can find $m'>\widetilde m_0$
 such that $z'=m'/\V_{\widetilde \nu}(m')$.  Let $m=-1/m'$ and
 $z:=m/\V_\nu(m)$. For our choice of $m,m'$ from \eqref{R formula} we
 get\begin{equation}\label{(5.4)}
\R_{\widetilde\nu}(z')=-\frac{1}{\R_{\nu}(z)}, \end{equation}
and to deduce \eqref{(5.1)}
we only need to note that
$
z'=z |\R_\nu(z)|
$. The latter is a consequence of \eqref{(5.3)} and  \eqref{R formula}.

To prove the converse implication, suppose  that \eqref{(5.1)} holds. Then taking the limit as
 $z\searrow 0$ we deduce that $m_0=-1/\widetilde m_0$.  Therefore for
 all $m> m_0$, close enough to $ m_0$
so that
$z:=m/\V_{\nu}(m)$ and $z':=z |\R_\nu(z)|$  are within the domain of
 \eqref{(5.1)}, we deduce that  \eqref{(5.4)} holds. As previously,
 for $m$ close enough to $m_0$, $z'$ is close enough to $0$ so that
 we can find $m'>\widetilde m_0$ such
that
$z'=m'/\V_{\widetilde\nu}(m')$. Then \eqref{(5.4)}  says that
 $mm'=-1$ (here we use \eqref{R formula}  again), so the identities
 $z=m/\V_{\nu}(m)$ and $z'=m'/\V_{\widetilde\nu}(m')$ imply \eqref{(5.3)}.

\end{proof}
\begin{remark} In particular, if $m_0\geq 0$ then \eqref{(5.3)} says
 that for %$\widetilde m_0<m<0$
 $m<0$ close enough to $m_0$
 we have
$\V_{\widetilde \nu}(m)=m^3 \V_\nu(-1/m)$.

Of course one can combine reciprocity with affine action
 $\varphi(x)=-x$.  Correspondingly, one can extend the definition of
 reciprocity to pairs  $\mathcal{K}_\pm(\nu)$ and
 $\mathcal{K}_\pm(\widetilde \nu)$.
\end{remark}

%\subsubsection{Examples of reciprocal Cauchy-Stieltjes kernel families}
\subsubsection{Example}
As mentioned above, we have been interested in the class of the
Cauchy-Stieltjes kernel families with pseudo-variance functions of
the form \eqref{cubic}, because it is the class the Cauchy-Stieltjes
kernel families with pseudo-variance functions of degree three which
are obtained by reciprocity from the families with quadratic
variance functions. In fact the \CSK family generated with
 pseudo-variance \eqref{cubic}
 is reciprocal with the right-sided part of the
quadratic \CSK family with  (pseudo)-variance \eqref{PV_Meixner} for $m_0=0$.
 In particular, the semicircle family with variance
function equal to $\frac{1}{p^2}$, $m_+=1/p$  and the inverse
 semicircle family
with pseudo-variance function equal to $\frac{m^{3}}{p^2}$, $m_+=-p^2$ are
reciprocal. For $z>0$, their $R$-transforms
$\R_{\widetilde\nu}(z)=z/p^2$ and $\R_\nu(z)=-p/\sqrt{z}$ are related
 by %the inversion
formula \eqref{(5.1)}.

Comparing   \eqref{DM-meixner} and  \eqref{DM-cubic}   we see that
 for reciprocal families the upper ends of the domain of means  do
 not satisfy a simple relation.

We remark that for $c<0$ the reciprocal of the free-infinitely
 divisible cubic family is free-binomial law which is not
 free-infinitely divisible.

\subsection*{\bf Acknowledgements}
The authors thank M. Bo\.zejko for information about free
$1/2$-stable distribution and J. Weso\l owski for a copy of
 \cite{Wesolowski90}. We also thank two anonymous referees for their
  detailed reports that helped to improve the paper.

%%%%%%%%%%%%
%%%    BibTeX
%%%%%%%%%%%%
%%%\bibliographystyle{alpha}
%%%\bibliographystyle{apalike}
%%    %%name (year)
%%%\bibliographystyle{newapa}
%%    %%full first names
%%%\bibliographystyle{plain}
%%    %%%%initials after name
%%%\bibliographystyle{acm}
%%%\bibliographystyle{nar}
%%    %%initials before name
%%%\bibliographystyle{siam} %capitalized
%%%\bibliographystyle{wmaainf}
%\bibliographystyle{abbrv} %plain style

%%    %%% other styles
%%%\bibliographystyle{apalike}
%%%\bibliographystyle{amsplain}
%%%\bibliographystyle{unsrt}
%%    %%% actual files used
%\bibliography{../vita,CSKF}

\begin{thebibliography}{10}

\bibitem{Akhiezer:1965}
N.~I. Akhiezer.
\newblock {\em The classical moment problem and some related questions in
  analysis}.
\newblock Translated by N. Kemmer. Hafner Publishing Co., New York, 1965.

\bibitem{Anshelevich01}
M.~Anshelevich.
\newblock Free martingale polynomials.
\newblock {\em J. Funct. Anal.}, 201:228--261, 2003.
\newblock arXiv:math.CO/0112194.

\bibitem{Belinschi:2005}
S.~T. Belinschi and H.~Bercovici.
\newblock Partially defined semigroups relative to multiplicative free
  convolution.
\newblock {\em Int. Math. Res. Not.}, (2):65--101, 2005.

\bibitem{Bercovici:1999}
H.~Bercovici and V.~Pata.
\newblock Stable laws and domains of attraction in free probability theory.
\newblock {\em Ann. of Math. (2)}, 149(3):1023--1060, 1999.
\newblock With an appendix by Philippe Biane.

\bibitem{Bercovici:1993}
H.~Bercovici and D.~Voiculescu.
\newblock Free convolution of measures with unbounded support.
\newblock {\em Indiana Univ. Math. J.}, 42(3):733--773, 1993.

\bibitem{Bryc-06-08}
W.~Bryc.
\newblock Free exponential families as kernel families.
\newblock {\em Demonstratio Mathematica}, XLII(3):657--672, 2009.
\newblock arxiv.org:math.PR:0601273.

\bibitem{Bryc-Ismail-05}
W.~Bryc and M.~Ismail.
\newblock Approximation operators, exponential, and $q$-exponential families.
\newblock Preprint. arxiv.org/abs/math.ST/0512224, 2005.

\bibitem{Diaconis-08}
P.~Diaconis, K.~Khare, and L.~Saloff-Coste.
\newblock Gibbs sampling, exponential families and orthogonal polynomials.
\newblock {\em Statistical Science}, 23:151--178, 2008.

\bibitem{Jorg}
B.~J{\o}rgensen.
\newblock {\em The theory of dispersion models}, volume~76 of {\em Monographs
  on Statistics and Applied Probability}.
\newblock Chapman \& Hall, London, 1997.

\bibitem{Let}
G.~Letac.
\newblock {\em Lectures on natural exponential families and their variance
  functions}, volume~50 of {\em Monograf\'\i as de Matem\'atica [Mathematical
  Monographs]}.
\newblock Instituto de Matem\'atica Pura e Aplicada (IMPA), Rio de Janeiro,
  1992.

\bibitem{Let:Mor}
G.~Letac and M.~Mora.
\newblock Natural real exponential families with cubic variance functions.
\newblock {\em Ann. Statist.}, 18(1):1--37, 1990.

\bibitem{Maassen:1992}
H.~Maassen.
\newblock Addition of freely independent random variables.
\newblock {\em J. Funct. Anal.}, 106(2):409--438, 1992.

\bibitem{Perez-Abreu:2008}
V.~P\'{e}rez-Abreu and N.~Sakuma.
\newblock Free generalized gamma convolutions.
\newblock {\em Electron. Commun. Probab.}, 13:526--539, 2008.

\bibitem{Saitoh-Yoshida01}
N.~Saitoh and H.~Yoshida.
\newblock The infinite divisibility and orthogonal polynomials with a constant
  recursion formula in free probability theory.
\newblock {\em Probab. Math. Statist.}, 21(1):159--170, 2001.

\bibitem{Wedderburn74}
R.~W.~M. Wedderburn.
\newblock Quasi-likelihood functions, generalized linear models, and the
  {G}auss-{N}ewton method.
\newblock {\em Biometrika}, 61:439--447, 1974.

\bibitem{Wesolowski90}
J.~Weso{\l}owski.
\newblock Kernel families.
\newblock Unpublished manuscript, 1999.

\end{thebibliography}
%\end{document}

%%%%%% paste bbl here

\end{document}